\documentclass[11pt,a4paper,oneside]{article}

\usepackage{authblk}

\title{On the number of solutions to a random instance of the permuted kernel problem%
	\footnote{%
		C.~Sanna is a member of GNSAGA of INdAM and of CrypTO, the group of Cryptography and Number~Theory of the Politecnico di Torino.
        This work was partially supported by project SERICS (PE00000014) under the MUR National Recovery and Resilience Plan funded by the European Union -- NextGenerationEU.}
}

\author{Carlo Sanna}

\affil{%
	Department of Mathematical Sciences, Politecnico di Torino\\
	Corso Duca degli Abruzzi 24, 10129 Torino, Italy
}

\affil[]{\texttt{carlo.sanna@polito.it}}

\date{} 

\usepackage{amsmath}
\usepackage{amssymb}
\usepackage{amsthm}
\usepackage{bm}
\usepackage{bbm}
\usepackage{comment}
\usepackage{geometry}
\geometry{left=1.15in, right=1.15in, top=.72in, bottom=.72in}
\usepackage[colorlinks=true]{hyperref}
\usepackage{enumitem}
\setlist[enumerate]{label=(\roman*),labelindent=1em,itemsep=0.5em,topsep=0.5em}
\usepackage{placeins}
\usepackage{graphicx}

\newtheorem{theorem}{Theorem}[section]

\newtheorem{lemma}[theorem]{Lemma}

\theoremstyle{definition}

\newtheorem{problem}{Problem}[section]
\theoremstyle{remark}

\DeclareMathOperator{\rank}{rk}
\DeclareMathOperator{\lker}{left-ker}
\newcommand{\transpose}{\intercal}
\newcommand{\GenIPKP}{\mathsf{GenIPKP}}
\newcommand{\GenPKP}{\mathsf{GenPKP}}
\newcommand{\sol}{\textsf{sol}}
\newcommand{\eqdistr}{\stackrel{\textsf{d}}{=}}

\uchyph=0

\begin{document}

\maketitle

\begin{abstract}
	The \emph{Permuted Kernel Problem} (PKP) is a problem in linear algebra that was first introduced by Shamir in 1989.
    Roughly speaking, given an $\ell \times m$ matrix $\bm{A}$ and an $m \times 1$ vector $\bm{b}$ over a finite field of $q$ elements $\mathbb{F}_q$, the PKP asks to find an $m \times m$ permutation matrix $\bm{\pi}$ such that $\bm{\pi} \bm{b}$ belongs to the kernel of $\bm{A}$.
    In recent years, several post-quantum digital signature schemes whose security can be provably reduced to the hardness of solving random instances of the PKP have been proposed.
    In this regard, it is important to know the expected number of solutions to a random instance of the PKP in terms of the parameters $q,\ell,m$.
    Previous works have heuristically estimated the expected number of solutions to be $m! / q^\ell$.

    We provide, and rigorously prove, exact formulas for the expected number of solutions to a random instance of the PKP and the related \emph{Inhomogeneous Permuted Kernel Problem} (IPKP), considering two natural ways of generating random instances.
\end{abstract}

{
    \small
    \noindent
    \textbf{Keywords:} cryptography; digital signatures; NP-complete problem; permutations; permuted \mbox{kernel} problem; post-quantum cryptography.\\[2pt]
    \textbf{MSC2020:} 05A05, 05A16, 15A99, 11T71, 68Q25.
}

\section{Introduction}\label{sec:intro}

The \emph{Permuted Kernel Problem} (PKP) is a problem in linear algebra that was first introduced by Shamir in 1989~\cite{C:Shamir89}.
Roughly speaking, given an $\ell \times m$ matrix $\bm{A}$ and an $m \times 1$ vector $\bm{b}$ over a finite field of $q$ elements $\mathbb{F}_q$, the PKP asks to find an $m \times m$ permutation matrix $\bm{\pi}$ such that $\bm{\pi} \bm{b}$ belongs to the kernel of $\bm{A}$, that is, $\bm{A} \bm{\pi} \bm{b} = \bm{0}$.
Using as security assumption the computational hardness of solving random instances $(\bm{A}, \bm{b})$ of the PKP, Shamir devised an identification scheme that has a very efficient implementation on low-cost smart cards.

In recent years, for several reasons, the PKP has become a very attractive problem to build post-quantum cryptographic schemes.
First, the PKP is based on simple objects and operations, which can be implemented easily and efficiently.
Second, the hardness of the PKP, and of some natural variants of the PKP, has been intensively studied \cite{C:BCCG92,JC:Georgiades92,PKC:JauJou01,cryptoeprint:2019/412,ACNS:PaiTer21,C:ParCha93,9834867,10.1109/TIT.2023.3323068}, and despite many efforts the best known algorithms have exponential complexities.
Third, the PKP is known to be NP-complete in the strong sense~\cite{C:Shamir89}, and quantum computers are expected to have a limited advantage in solving NP-complete problems (essentially, no more than the advantage of Grover's search)~\cite{MR1471991}.

Consequently, several post-quantum digital signature schemes whose security can be provably reduced to the hardness of solving random instances of the PKP have been proposed, namely: \textsf{PKP-DSS} (2019) \cite{INDOCRYPT:BFKMPP19}, \textsf{SUSHSYFISH} (2020) \cite{EC:Beullens20}, Bidoux and Gaborit's (2023)~\cite{10.1007/978-3-031-33017-9_2}, and \textsf{PERK} (2024) \cite{Bettaieb2024}.
In particular, \textsf{PERK} has been submitted to the NIST additional call for the post-quantum cryptography standardization process~\cite{NIST_call}.

In all these schemes, the public key is a random instance of the PKP (or a variant thereof), the secret key is a solution to such an instance, and the signing and verification algorithms constitute a non-interactive zero-knowledge proof of knowledge of the solution.
Furthermore, the parameters $q, \ell, m$ are selected in order to ensure that the known algorithms to solve the PKP require on average $2^\lambda$ or more operations, where $\lambda$ is the desired security parameter.

In addition, for mainly two reasons, it is desiderable to choose $q, \ell, m$ so that the random instance of the PKP is likely to have exactly one solution (or at least a number of solutions that is bounded by a known constant).
First, because it is natural to have exactly one secret key corresponding to the public key.
Second, because the complexity of an algorithm searching a solution to the PKP is expected to be (approximately) inversely proportional to the number of solutions.

Shamir~\cite{C:Shamir89} stated that if $q^\ell \approx m!$ then a random instance of the PKP is likely to have a unique solution.
This claim (or natural generalizations of it) is repeated in most of the subsequent works~\cite{cryptoeprint:2023/589,PKC:JauJou01,cryptoeprint:2019/412,EPRINT:LAMPAT11,ACNS:PaiTer21,C:ParCha93,https://doi.org/10.1002/ett.4460080505}.
The reasoning behind it is the following.
Assuming that $\ell \leq m$ and that $\bm{A}$ has rank equal to $\ell$, the probability that a random $m \times 1$ vector $\bm{c}$ over $\mathbb{F}_q$, which is taken with uniform distribution and independently from $\bm{A}$, belongs to the kernel of $\bm{A}$ is equal to $q^{-\ell}$.
Hence, assuming that, for a uniformly distributed random $m \times m$ permutation matrix $\bm{\pi}$, the vector $\bm{\pi}\bm{b}$ behaves like the random vector $\bm{c}$, by the linearity of the expectation we get that the expected number of solutions to the PKP instance $(\bm{A}, \bm{b})$ is equal to $m! / q^\ell$.

Note that the previous reasoning is only a heuristic and not a rigorous mathematical proof.
The main issue is that, unlike $\bm{c}$ and $\bm{A}$, the random variables $\bm{\pi}\bm{b}$ and $\bm{A}$ are not independent.
In fact, the probability of the event $\bm{A}\bm{\pi}\bm{b} = \bm{0}$ depends on how the random instance $(\bm{A}, \bm{b})$ is generated.

Moreover, for real-world choices of the parameters $q, \ell, m$, the heuristic formula $m! / q^\ell$ cannot be empirically tested.
In fact, since the parameters are chosen to make finding the solutions difficult, one cannot efficiently count the solutions to test the formula empirically. (Note that, since PKP is NP-complete, its search version reduces to its decision version, which in turn reduces to its counting version. Therefore, there should be no shortcuts to count the solutions much more efficiently than by actually finding them.)

In light of the previous considerations, and due to the importance of building post-quantum cryptography on solid mathematical foundations, the purpose of this paper is to provide, and rigorously prove, exact formulas for the expected number of solution to random instances of the PKP.
More precisely, we consider both the PKP and the \emph{Inhomogeneous Permuted Kernel Problem} (IPKP), and for each of these problems we study two natural ways of generating random instances.

The structure of the paper is as follows.
In Section~\ref{sec:notation}, we state the main notation.
In Section~\ref{sec:IPKP}, we provide the definition of the IPKP and we introduce the algorithms to generate random instances of the IPKP and the PKP.
In Section~\ref{sec:number-of-solutions}, we state the main results of the paper, that is, the exact formulas for the expected number of solutions to the random instances of the IPKP and the PKP generated by the algorithms.
Moreover, we compare these exact formulas with the heuristic formula.
In Section~\ref{sec:proofs}, we give the proofs of the main results.
Finally, in Section~\ref{sec:remarks}, we state some concluding remarks and some possible questions for future research.

\section{Notation}\label{sec:notation}

Hereafter, let $q$ be a prime power, let $\mathbb{F}_q$ be a finite field of $q$ elements, and let $\mathbb{F}_q^*$ be the multiplicative group of $\mathbb{F}_q$.
For all integers $m,n,r \geq 0$, let $\mathbb{F}_q^{m \times n}$ be the vector space of $m \times n$ matrices over $\mathbb{F}_q$, let $\mathbb{F}_q^{m \times n, r}$ be the subset of $\mathbb{F}_q^{m \times n}$ containing the matrices of rank equal to $r$, and let $\mathbb{F}_q^{\star m \times n, r}$ be the subset of $\mathbb{F}_q^{m \times n, r}$ containing the matrices having $m$ pairwise distinct nonzero rows.
Let $\bm{0}$ and $\bm{I}$ be the zero matrix and the identity matrix, respectively, where the sizes will be always clear from the context.
For every matrix $\bm{A} \in \mathbb{F}_q^{m \times n}$, let $\rank(\bm{A})$ be the rank of $\bm{A}$, let $\bm{A}^\transpose$ be the transpose of $\bm{A}$, let $\lker(\bm{A}) := \{\bm{x} \in \mathbb{F}_q^{1 \times m} : \bm{x}\bm{A} = \bm{0}\}$ be the left kernel of $\bm{A}$, and for every positive integer $k$ let $\big(\!\lker(\bm{A})\big)^k$ the set of $k \times m$ matrices with each row in $\lker(\bm{A})$.

Let $\mathbb{S}_n$ be the symmetric group of $\{1, \dots, n\}$.
For each permutation $\pi \in \mathbb{S}_n$, let $\bm{\pi}$ be the corresponding $n \times n$ permutation matrix whose entry of the $i$th row and $j$th column is equal to the Kronecker symbol $\delta_{\pi(i),j}$.
For each positive integer $k \leq n$, let $\mathbb{S}_{n,k}$ be the subset of permutations in $\mathbb{S}_n$ that can be written as the product of exactly $k$ disjoint cycles.

Let $s \gets \mathcal{S}$ denote that $s$ is taken at random with uniform distribution from the finite set $\mathcal{S}$.
Let $y \gets \textsf{A}(x)$ denote running the (possibly probabilistic) algorithm $\textsf{A}$ on input $x$ and assigning the output to $y$.

Let $X \eqdistr Y$ mean that the random variables $X$ and $Y$ have the same distribution, let $\mathbb{P}[E]$ be the probability that the event $E$ happens, let $\mathbb{E}[X]$ be the expected value of the random variable $X$, and let $\mathbbm{1}[S]$ be equal to $1$, respectively $0$, if the statement $S$ is true, respectively false.

Let $\varphi(\cdot)$ be the Euler function, and let $\lfloor\cdot\rfloor$ be the floor function.

\section{Inhomogeneous permuted kernel problem}\label{sec:IPKP}

The \emph{Inhomogeneous Permuted Kernel Problem} (IPKP) is defined as follows (cf.\ \cite{10.1109/TIT.2023.3323068}).

\begin{problem}[Inhomogeneous Permuted Kernel Problem]
    \phantom{m}
    \begin{itemize}
        \item[-] Parameters: a prime power $q$ and positive integers $\ell, m, n$.
        \item[-] Instance: a triple of matrices $(\bm{A}, \bm{B}, \bm{C}) \in \mathbb{F}_q^{\ell \times m} \times \mathbb{F}_q^{m \times n} \times \mathbb{F}_q^{\ell \times n}$.
        \item[-] Task: finding a permutation $\pi \in \mathbb{S}_m$ such that $\bm{A}\bm{\pi}\bm{B} = \bm{C}$.
    \end{itemize}
\end{problem}

If $\bm{C} = \bm{0}$ and $n = 1$, then the IPKP corresponds to the version of the PKP formulated in Section~\ref{sec:intro}.
In fact, the PKP is sometimes called \emph{homogeneous} PKP.
Moreover, for $n = 1$ the problem is called \emph{monodimensional}, while for $n > 1$ it is called \emph{multidimensional}.
The heuristic formula for the expected number of solutions naturally generalizes to the multidimensional case, becoming $m! / q^{\ell n}$.

We let $N_\sol(\bm{A}, \bm{B}, \bm{C})$ and $N_\sol(\bm{A}, \bm{B})$ denote the number of solutions to the IPKP instance $(\bm{A}, \bm{B}, \bm{C})$ and to the PKP instance $(\bm{A}, \bm{B})$, respectively.

In order to generate hard instances of the IPKP or the PKP, it is recommended to take $\rank(\bm{A}) = \ell$ and $\rank(\bm{B}) = n$ (see, e.g.,~\cite[Section V.A]{10.1109/TIT.2023.3323068}).
Note that these conditions and the existence of a solution imply that $\max(\ell, n) \leq m$ for the IPKP, and $\ell + n \leq m$ for the PKP.
Hence, a simple algorithm to generate a hard random instance of the IPKP, respectively the PKP, together with a solution of such an instance is provided by $\GenIPKP$ (Figure~\ref{fig:GenIPKP}), respectively $\GenPKP$ (Figure~\ref{fig:GenPKP}).
Indeed, the algorithm used in the implementation of \textsf{PERK}~\cite{PERK_website} is equivalent to $\GenIPKP$, with the only difference that $\bm{A}$ is sampled from $\mathbb{F}_q^{\ell \times m}$ instead of $\mathbb{F}_q^{\ell \times m, \ell}$.

\begin{figure}[h]
    \centering
    \includegraphics{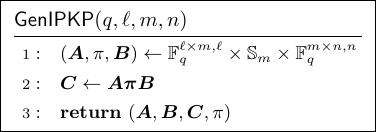}
    \caption{A generator of random instances of the IPKP, assuming that $\max(\ell, n) \leq m$.}
    \label{fig:GenIPKP}
\end{figure}

\begin{figure}[h]
    \centering
    \includegraphics{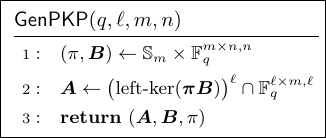}
    \caption{A generator of random instances of the PKP, assuming that $\ell + n \leq m$.}
    \label{fig:GenPKP}
\end{figure}

Furthermore, other recommended conditions for the hardness of the instance are that $\bm{A}$ has pairwise distinct columns and that $\bm{B}$ has pairwise distinct rows~\cite[Section V.A]{10.1109/TIT.2023.3323068}.
We consider only the second condition.
We let $\GenIPKP^\star$ and $\GenPKP^\star$ be defined as $\GenIPKP$ and $\GenPKP$ but with $\bm{B}$ taken from $\mathbb{F}_q^{\star m \times n, n}$.
Note that $\mathbb{F}_q^{\star m \times n, n}$ is nonempty if and only if $n \leq m < q^n$.

\section{Expected number of solutions}\label{sec:number-of-solutions}

Our first result is a formula for the expected number of solutions to a random instance of the IPKP generated by $\GenIPKP$.

\begin{theorem}\label{thm:E-GenIPKP}
    Let $\ell, m, n$ be positive integers with $\max(\ell, n) \leq m$, and let $(\bm{A}, \bm{B}, \bm{C}, \pi) \gets \GenIPKP(q,\ell,m,n)$.
    Then
    \begin{equation}\label{equ:E-GenIPKP-0}
        \mathbb{E}\big[N_{\sol}(\bm{A}, \bm{B}, \bm{C})\big] = \sum_{k \,=\, 1}^m |\mathbb{S}_{m,k}| \!\!
        \sum_{r \,=\, 0}^{\min(\ell, m - k)} \frac{|\mathbb{F}_q^{\ell \times (m-k), r}| \, |\mathbb{F}_q^{(\ell-r) \times k, \ell - r}| \, q^{k r}}{|\mathbb{F}_q^{\ell \times m, \ell}|} \prod_{i \,=\, 0}^{n - 1} \frac{q^{m - r} - q^i}{q^m - q^i}
    \end{equation}
\end{theorem}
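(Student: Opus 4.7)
The plan is to apply linearity of expectation to write
\[
\mathbb{E}\big[N_{\sol}(\bm{A},\bm{B},\bm{C})\big] \;=\; \sum_{\sigma \in \mathbb{S}_m} \mathbb{P}\big[\bm{A}\bm{\sigma}\bm{B} = \bm{A}\bm{\pi}\bm{B}\big],
\]
and then to decouple the randomness of $\pi$ from the summand. The key observation is that $\bm{\pi}\bm{B}$ merely permutes the rows of $\bm{B}$, so $\bm{B}' := \bm{\pi}\bm{B}$ is again uniform on $\mathbb{F}_q^{m \times n, n}$ and independent of $\bm{A}$. Writing $\bm{A}\bm{\sigma}\bm{B} = \bm{A}(\bm{\sigma}\bm{\pi}^{-1})\bm{B}'$ and setting $\tau = \sigma\pi^{-1}$, the equation $\bm{A}\bm{\sigma}\bm{B} = \bm{A}\bm{\pi}\bm{B}$ is equivalent to $\bm{A}(\bm{\tau}-\bm{I})\bm{B}' = \bm{0}$. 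Since for each fixed $\pi$ the assignment $\sigma \mapsto \tau$ is a bijection of $\mathbb{S}_m$, reindexing eliminates $\pi$ entirely and gives
\[
\mathbb{E}\big[N_{\sol}(\bm{A},\bm{B},\bm{C})\big] \;=\; \sum_{\tau \in \mathbb{S}_m} \mathbb{P}_{\bm{A},\bm{B}}\big[\bm{A}(\bm{\tau}-\bm{I})\bm{B} = \bm{0}\big].
\]

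Next I would group $\tau$ by its number of cycles $k$, invoking the classical identity $\rank(\bm{\tau}-\bm{I}) = m-k$ for $\tau \in \mathbb{S}_{m,k}$ (the kernel of $\bm{\tau}-\bm{I}$ is spanned by the indicator vectors of the cycles of $\tau$). The aim becomes to show that $\mathbb{P}_{\bm{A},\bm{B}}[\bm{A}(\bm{\tau}-\bm{I})\bm{B} = \bm{0}]$ depends on $\tau$ only through $k$, thereby yielding the factor $|\mathbb{S}_{m,k}|$. Using the independence of $\bm{A}$ and $\bm{B}$, I would condition on $\bm{M} := \bm{A}(\bm{\tau}-\bm{I})$ and decompose according to $r := \rank(\bm{M})$, computing separately (a) the conditional $\bm{B}$-probability and (b) the $\bm{A}$-distribution of $r$.

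For (a), $\bm{M}\bm{B} = \bm{0}$ is equivalent to each of the $n$ columns of $\bm{B}$ lying in the $(m-r)$-dimensional subspace $\ker(\bm{M})$; a direct count of such $\bm{B} \in \mathbb{F}_q^{m \times n, n}$ produces the factor $\prod_{i=0}^{n-1}(q^{m-r}-q^i)/(q^m-q^i)$ appearing in~\eqref{equ:E-GenIPKP-0}. For (b), my strategy is a change of basis: since $\bm{E} := \bm{\tau}-\bm{I}$ has rank $m-k$, one can pick $\bm{P} \in \mathrm{GL}_m(\mathbb{F}_q)$ such that the top $(m-k)$ rows of $\bm{P}\bm{E}$ form a matrix $\bm{E}_1 \in \mathbb{F}_q^{(m-k) \times m}$ of full row rank and the bottom $k$ rows vanish. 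Setting $\bm{A}' := \bm{A}\bm{P}^{-1}$ and writing $\bm{A}' = (\bm{A}'_1 \mid \bm{A}'_2)$ with $\bm{A}'_1$ of size $\ell \times (m-k)$, the map $\bm{A} \mapsto \bm{A}'$ is a rank-preserving bijection of $\mathbb{F}_q^{\ell \times m, \ell}$, so $\bm{A}'$ is again uniform. From $\bm{A}\bm{E} = \bm{A}'_1\bm{E}_1$ and the surjectivity of $\bm{E}_1$ we obtain $\rank(\bm{A}\bm{E}) = \rank(\bm{A}'_1)$, reducing the problem to the rank distribution of the first $m-k$ columns of a uniformly random full-rank $\ell \times m$ matrix, which is visibly independent of $\tau$.

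It then remains to count, for each $r$, the $\bm{A}' \in \mathbb{F}_q^{\ell \times m, \ell}$ with $\rank(\bm{A}'_1) = r$: first pick $\bm{A}'_1 \in \mathbb{F}_q^{\ell \times (m-k), r}$ freely, then choose $\bm{A}'_2 \in \mathbb{F}_q^{\ell \times k}$ so that $(\bm{A}'_1 \mid \bm{A}'_2)$ has rank $\ell$. The latter is equivalent to the image of $\bm{A}'_2$ spanning the quotient $\mathbb{F}_q^{\ell}/\mathrm{Im}(\bm{A}'_1) \cong \mathbb{F}_q^{\ell - r}$, so the projection of $\bm{A}'_2$ admits $|\mathbb{F}_q^{(\ell-r)\times k, \ell-r}|$ choices and each has $q^{rk}$ lifts to $\mathbb{F}_q^{\ell \times k}$. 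Dividing by $|\mathbb{F}_q^{\ell \times m, \ell}|$ produces the remaining factor of~\eqref{equ:E-GenIPKP-0}, and assembling everything completes the proof. I expect the main delicate step to be the change of basis in (b): one needs to check carefully that $\bm{A}\mapsto\bm{A}\bm{P}^{-1}$ preserves the uniform distribution on $\mathbb{F}_q^{\ell \times m, \ell}$ and that $\rank(\bm{A}'_1\bm{E}_1) = \rank(\bm{A}'_1)$; the remaining pieces are essentially straightforward counting.
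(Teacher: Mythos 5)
Your proposal is correct and follows essentially the same route as the paper: reduce to $\sum_{\tau}\mathbb{P}[\bm{A}(\bm{\tau}-\bm{I})\widetilde{\bm{B}}=\bm{0}]$ via the substitution $\tau=\sigma\pi^{-1}$ and the uniformity of $\bm{\pi}\bm{B}$, use $\rank(\bm{\tau}-\bm{I})=m-k$, and then condition on $r=\rank(\bm{A}(\bm{\tau}-\bm{I}))$, computing the rank distribution by the same change-of-basis and column-counting argument the paper isolates in its Lemmas~\ref{lem:prob-first-columns-rank} and~\ref{lem:prob-zero-products}. The steps you flag as delicate (uniformity of $\bm{A}\bm{P}^{-1}$ and $\rank(\bm{A}'_1\bm{E}_1)=\rank(\bm{A}'_1)$) go through exactly as you expect.
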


Regarding~\eqref{equ:E-GenIPKP-0}, note that $|\mathbb{S}_{m,k}|$ is a \emph{Stirling number of the first kind}, which can be computed recursively (see, e.g., \cite[Eq.\ 6.8]{MR1397498}), while the cardinality of the set of matrices of a prescribed rank is given by an explicit formula (see Lemma~\ref{lem:rank-count}).

Theorem~\ref{thm:E-GenIPKP} shows that, when random instances of the IPKP are generated by $\GenIPKP$, the heuristic formula for the expected number of solutions can be very far off from the true value.
For example, let $q = 1021$, $\ell = 35$, $m = 79$, and $n = 3$, which correspond to the first parameter set of \textsf{PERK} \cite[Table~2]{Bettaieb2024}.
On the one hand, the heuristic formula predicts that the expected number of solutions (not counting $\pi$, which is a solution by construction) is equal to $m! / q^{\ell n} \approx 10^{-199}$.
On the other hand, Theorem~\ref{thm:E-GenIPKP} yields that the true value is about $2.89 \!\cdot\! 10^{-6}$ (again, not counting $\pi$).
Thus the heuristic formula underestimates by a factor of $10^{-193}$ the number of solutions that are not equal to $\pi$.
(However, note that $2.89 \!\cdot\! 10^{-6}$ is still irrelevant and has no impact on the parameter choice of \textsf{PERK}.)

Our second result is a formula for the expected value of the number of solutions to a random instance of the monodimensional IPKP generated by $\GenIPKP^\star$.

\begin{theorem}\label{thm:E-mono-GenIPKP-dist}
    Let $\ell$ and $m$ be positive integers with $\ell \leq m < q$, and let
    $(\bm{A}, \bm{B}, \bm{C}, \pi) \gets \GenIPKP^\star(q,\ell,m,1)$.
    Then
    \begin{equation}\label{equ:E-mono-GenIPKP-dist-0}
        \mathbb{E}\big[N_{\sol}(\bm{A}, \bm{B}, \bm{C})\big] = 1 + \frac{(m! - 1)(q^{m - \ell} - 1)}{q^m - 1} .
    \end{equation}
\end{theorem}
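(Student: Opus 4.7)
The plan is to expand $N_{\sol}$ by linearity, use the independence of $\bm{A}$ from $(\bm{B}, \pi)$ guaranteed by $\GenIPKP^\star$, and reduce everything to a single conditional probability that depends only on whether the difference $\bm{\sigma}\bm{B} - \bm{\pi}\bm{B}$ vanishes. Concretely, I would start from the indicator decomposition
\[
\mathbb{E}\bigl[N_{\sol}(\bm{A},\bm{B},\bm{C})\bigr] \;=\; \sum_{\sigma \in \mathbb{S}_m} \mathbb{P}\bigl[\bm{A}(\bm{\sigma}\bm{B} - \bm{\pi}\bm{B}) = \bm{0}\bigr],
\]
fix $\sigma$, condition on $(\bm{B},\pi)$, and split the probability according to whether $\bm{v}_\sigma := \bm{\sigma}\bm{B} - \bm{\pi}\bm{B}$ is zero. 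The case $\bm{v}_\sigma = \bm{0}$ is the easy one: since $\bm{B}$ is drawn from $\mathbb{F}_q^{\star m \times 1, 1}$, its entries are pairwise distinct, so $\bm{\sigma}\bm{B} = \bm{\pi}\bm{B}$ forces $\sigma = \pi$, and $\mathbb{P}[\bm{v}_\sigma = \bm{0}] = 1/m!$ for every fixed $\sigma$.

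The heart of the calculation is then to show that for any fixed nonzero $\bm{v} \in \mathbb{F}_q^{m\times 1}$ and $\bm{A}$ uniform in $\mathbb{F}_q^{\ell \times m, \ell}$,
\[
\mathbb{P}\bigl[\bm{A}\bm{v} = \bm{0}\bigr] \;=\; \frac{q^{m-\ell} - 1}{q^m - 1}.
\]
I would prove this by the direct count: the rows of such an $\bm{A}$ must lie in the $(m{-}1)$-dimensional hyperplane $\{\bm{x}\in\mathbb{F}_q^{1\times m} : \bm{x}\bm{v} = 0\}$, so by Lemma~\ref{lem:rank-count} the number of admissible $\bm{A}$ equals $|\mathbb{F}_q^{\ell \times (m-1),\ell}| = \prod_{i=0}^{\ell-1}(q^{m-1} - q^i)$, and dividing by $|\mathbb{F}_q^{\ell \times m,\ell}| = \prod_{i=0}^{\ell-1}(q^m - q^i)$ yields after pulling a factor $q^i$ from each numerator and denominator a telescoping product that collapses to $(q^{m-\ell}-1)/(q^m-1)$. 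An alternative, perhaps more conceptual derivation is to note that the right kernel of $\bm{A}$ is a uniformly distributed $(m-\ell)$-dimensional subspace of $\mathbb{F}_q^{m \times 1}$ (by $GL_m$-invariance), whence a fixed nonzero vector lies in it with probability $(q^{m-\ell}-1)/(q^m-1)$.

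Putting the two pieces together, each summand equals
\[
\frac{1}{m!} + \left(1 - \frac{1}{m!}\right)\cdot\frac{q^{m-\ell}-1}{q^m-1},
\]
independently of $\sigma$, and multiplying by $|\mathbb{S}_m| = m!$ gives exactly \eqref{equ:E-mono-GenIPKP-dist-0}. The main (and essentially only) obstacle is the nonzero-vector probability: once the telescoping or the $GL_m$-symmetry argument is in hand, the rest is bookkeeping that is enabled by the two structural features of $\GenIPKP^\star$, namely the independence of the sampling of $\bm{A}$, $\bm{B}$, $\pi$ and the distinctness of the entries of $\bm{B}$ that collapses $\mathbbm{1}[\bm{\sigma}\bm{B} = \bm{\pi}\bm{B}]$ to $\mathbbm{1}[\sigma = \pi]$.
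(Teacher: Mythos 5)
Your proposal is correct and takes essentially the same route as the paper: the paper substitutes $\rho = \sigma\pi$ so that the case split becomes $\sigma = \mathrm{id}$ versus $\sigma \neq \mathrm{id}$ and quotes Lemma~\ref{lem:prob-zero-products} for the probability $(q^{m-\ell}-1)/(q^m-1)$, while you condition directly on the event $\sigma = \pi$ and rederive that probability by a telescoping count, but the two key ingredients (distinctness of the entries of $\bm{B}$ collapsing $\bm{\sigma}\bm{B} = \bm{\pi}\bm{B}$ to $\sigma = \pi$, and the annihilation probability of a fixed nonzero vector by a uniform full-rank $\bm{A}$) are identical. No gaps.
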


In the right-hand side of~\eqref{equ:E-mono-GenIPKP-dist-0}, the additive term $1$ corresponds to the solution $\pi$, while the second additive term corresponds to additional solutions.
Furthermore, as $m - \ell \to +\infty$, the second term is asymptotic to $m! / q^\ell$, in agreement with the heuristic formula.

Our third result is a formula for the expected number of solutions to a random instance of the monodimensional PKP generated by $\GenPKP$.

\begin{theorem}\label{thm:E-mono-GenPKP}
    Let $\ell, m$ be integers with $0 < \ell < m$.
    \mbox{If $(\bm{A}, \bm{B}, \pi) \gets \GenPKP(q,\ell,m,1)$ then}
    \begin{align}\label{equ:E-mono-GenPKP-0}
        \mathbb{E}&[N_\sol(\bm{A}, \bm{B})]
        = \frac{m!(q^{m-\ell} - q)}{q^m - q} \nonumber\\
        &+ \frac{m!(q^m - q^{m - \ell})}{(q^m - 1)(q^m - q)}\left(\sum_{d \,\mid\, q - 1} \varphi(d) \binom{\lfloor (q+m-1) / d\rfloor}{\lfloor m / d\rfloor} -q + 1\right) .
    \end{align}
\end{theorem}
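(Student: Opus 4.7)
The plan is to reduce the expected number of solutions to a combinatorial count of eigenvectors of permutation matrices, and then evaluate that count via the exponential generating function for permutations by cycle type.

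First, I introduce the auxiliary variable $\bm{C}:=\bm{\pi}\bm{B}$. By construction of $\GenPKP$ (with $n=1$), conditional on $(\bm{A},\pi)$ the vector $\bm{B}$ is uniform on $\bm{\pi}^{-1}\ker(\bm{A})\setminus\{\bm{0}\}$, so $\bm{C}$ is uniform on $\ker(\bm{A})\setminus\{\bm{0}\}$ with a conditional distribution that does not depend on $\pi$; hence $\pi$ is independent of $(\bm{A},\bm{C})$. Writing $N_\sol(\bm{A},\bm{B})=\sum_\sigma\mathbbm{1}[\bm{A}\bm{\sigma}\bm{B}=\bm{0}]$ and substituting $\bm{B}=\bm{\pi}^{-1}\bm{C}$, for each fixed $\sigma$ the change of variable $\tau:=\sigma\pi^{-1}$ produces a uniform $\tau\in\mathbb{S}_m$ independent of $(\bm{A},\bm{C})$; hence every summand equals $\mathbb{P}[\bm{A}\bm{\tau}\bm{C}=\bm{0}]$, and $\mathbb{E}[N_\sol(\bm{A},\bm{B})]=m!\cdot\mathbb{P}[\bm{A}\bm{\tau}\bm{C}=\bm{0}]$. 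Swapping the order of summation to bring $\bm{A}$ innermost, a basis-change argument shows that, for $c\neq\bm{0}$, the number of full-rank $A\in\mathbb{F}_q^{\ell\times m,\ell}$ with $Ac=\bm{0}$ and $A\bm{\tau}c=\bm{0}$ equals $|\mathbb{F}_q^{\ell\times(m-1),\ell}|$ if $\bm{\tau}c\in\langle c\rangle$ and $|\mathbb{F}_q^{\ell\times(m-2),\ell}|$ otherwise. Setting
\[
S := \bigl|\{(\tau,c)\in\mathbb{S}_m\times(\mathbb{F}_q^m\setminus\{\bm{0}\}):\bm{\tau}c\in\langle c\rangle\}\bigr|,
\]
splitting the sum accordingly and simplifying the ratios $|\mathbb{F}_q^{\ell\times(m-k),\ell}|/|\mathbb{F}_q^{\ell\times m,\ell}|=\prod_{j=0}^{k-1}(q^{m-\ell-j}-1)/(q^{m-j}-1)$ using $q^m-q=q(q^{m-1}-1)$ and $q^m-q^{m-\ell}=q^{m-\ell}(q^\ell-1)$ rewrites the result as
\[
\mathbb{E}[N_\sol(\bm{A},\bm{B})] = \frac{m!(q^{m-\ell}-q)}{q^m-q} + \frac{(q^m-q^{m-\ell})\,S}{(q^m-1)(q^m-q)}.
\]

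The substantive task is to compute $S$. Since $\bm{\tau}$ is invertible, the condition $\bm{\tau}c\in\langle c\rangle$ for $c\neq\bm{0}$ is equivalent to $\bm{\tau}c=\lambda c$ for a unique $\lambda\in\mathbb{F}_q^*$, whence
\[
S = \sum_{\lambda\in\mathbb{F}_q^*}\sum_{\tau\in\mathbb{S}_m}\bigl(|\ker(\bm{\tau}-\lambda\bm{I})|-1\bigr).
\]
Inspecting the cycle decomposition of $\tau$, on a cycle of length $k$ the equations $c_{\tau(i)}=\lambda c_i$ admit a one-parameter family of solutions in $\mathbb{F}_q$ if $d\mid k$ (where $d:=\operatorname{ord}(\lambda)$) and only the zero solution otherwise; hence $|\ker(\bm{\tau}-\lambda\bm{I})|=q^{N_d(\tau)}$, where $N_d(\tau)$ counts the cycles of $\tau$ of length divisible by $d$. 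Grouping by $d$ (which must divide $q-1$) and noting that $\mathbb{F}_q^*$ contains $\varphi(d)$ elements of order $d$, I obtain $S=\sum_{d\mid q-1}\varphi(d)\sum_{\tau}q^{N_d(\tau)}-(q-1)m!$.

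The main obstacle is evaluating $\sum_{\tau\in\mathbb{S}_m}q^{N_d(\tau)}$. By the exponential formula for permutations weighted by cycle type, assigning each cycle of length $k$ the weight $q$ if $d\mid k$ and $1$ otherwise, the exponential generating function factors as
\[
\exp\!\left(\sum_{k\ge 1}\frac{x^k}{k}+(q-1)\sum_{j\ge 1}\frac{x^{dj}}{dj}\right)=(1-x)^{-1}(1-x^d)^{-(q-1)/d}.
\]
Expanding $(1-x^d)^{-(q-1)/d}$ as a binomial series and applying the hockey-stick identity to extract $[x^m]$ gives $\sum_\tau q^{N_d(\tau)}=m!\binom{(q-1)/d+\lfloor m/d\rfloor}{\lfloor m/d\rfloor}$; since $(q-1)/d$ is an integer, this binomial equals $\binom{\lfloor(q+m-1)/d\rfloor}{\lfloor m/d\rfloor}$. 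Substituting back into $S$ and then into the displayed expression for $\mathbb{E}[N_\sol(\bm{A},\bm{B})]$ yields exactly~\eqref{equ:E-mono-GenPKP-0}.
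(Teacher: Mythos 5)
Your proposal is correct and follows essentially the same route as the paper: after reformulating the sampling of $\GenPKP$ so that a single uniform permutation independent of $(\bm{A},\bm{C})$ remains, both arguments reduce the expectation to the dichotomy of whether $\bm{\tau}\bm{c}$ is proportional to $\bm{c}$, hence to the eigenvector count $\sum_{\tau}|E_\tau|$, which you evaluate with the same cycle-structure analysis, the same exponential generating function $(1-x)^{-1}(1-x^d)^{-(q-1)/d}$, and the same hockey-stick identity as the paper's Lemma~\ref{lem:sum-E-sigma}. The only cosmetic difference is that you compute the conditional law of $\bm{B}$ given $(\bm{A},\pi)$ directly instead of conditioning the $\GenIPKP$ output on $\widetilde{\bm{C}}=\bm{0}$.
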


Although the first additive term on the right-hand side of \eqref{equ:E-mono-GenPKP-0} is asymptotic to $m! / q^\ell$ as $m - \ell \to +\infty$, the second additive term is not negligible.
Hence, the expected number of solutions can be significantly larger than that predicted by the heuristic formula.
For instance, let $q = 251$, $\ell = 41$, and $m = 69$, which correspond to the parameter set for the security level $\lambda = 128$ of \textsf{PKP-DSS} \cite[Table 1]{INDOCRYPT:BFKMPP19}.
Then the heuristic formula predicts about $m! / q^\ell \approx 0.7$ solutions, while Theorem~\ref{thm:E-mono-GenPKP} yields that the true value is about $5412$.

Our forth and final result is a formula for the expected number of solutions to a random instance of the monodimensional PKP generated by $\GenPKP^*$.

\begin{theorem}\label{thm:E-mono-GenPKP-dist}
	Let $\ell$ and $m$ be positive integers with $\ell < m < q$.
	If $(\bm{A}, \bm{B}, \pi) \gets \GenPKP^*(q,\ell,m,1)$ then
    \begin{equation}\label{equ:E-mono-GenPKP-dist-0}
        \mathbb{E}[N_\sol(\bm{A}, \bm{B})]
        = \frac{m!(q^{m - \ell} - q)}{q^m - q} + \frac{q^m - q^{m-\ell}}{(q^m - q)\binom{q-1}{m}} \sum_{d \,\mid\, \gcd(q - 1, m)} \varphi(d) \binom{(q-1)/d}{m/d} .
    \end{equation}
\end{theorem}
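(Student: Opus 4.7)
My plan is to parallel the proof of Theorem~\ref{thm:E-mono-GenPKP}, exploiting the extra multiplicative rigidity that the constraint $\bm{B} \in \mathbb{F}_q^{\star m\times 1,1}$ forces on the entries of $\bm{b} := \bm{\pi}\bm{B}$. After the change of variables $\tau = \sigma\pi^{-1}$ in the sum defining $N_\sol(\bm{A},\bm{B})$, the linearity of expectation reduces the theorem to evaluating
\[
\mathbb{E}[N_\sol(\bm{A},\bm{B})] = \sum_{\tau \in \mathbb{S}_m} \mathbb{P}\bigl[\bm{\tau}\bm{b} \in \ker(\bm{A})\bigr],
\]
where the probability is under the joint law of $(\bm{A},\bm{b})$ induced by $\GenPKP^*$. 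A short count shows that the number of rank-$\ell$ matrices annihilating a fixed nonzero vector in $\mathbb{F}_q^{m\times 1}$ depends only on $q,\ell,m$, so this joint law is the uniform distribution on admissible pairs $(\bm{A}_0,\bm{b}_0) \in \mathbb{F}_q^{\ell\times m,\ell} \times \mathbb{F}_q^{\star m\times 1,1}$ with $\bm{A}_0\bm{b}_0 = \bm{0}$. In particular, $\bm{b}$ is marginally uniform on $\mathbb{F}_q^{\star m\times 1,1}$ and, conditionally on $\bm{b}$, $\ker(\bm{A})$ is uniform among the $(m-\ell)$-dimensional subspaces of $\mathbb{F}_q^m$ containing $\bm{b}$.

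I would then split each summand according to whether $\bm{\tau}\bm{b}$ lies in $\mathrm{span}(\bm{b})$. Since $\bm{b} \in \ker(\bm{A})$, when $\bm{\tau}\bm{b} = c\bm{b}$ for some $c \in \mathbb{F}_q^*$ the event is automatic and contributes $1$. Otherwise $\bm{b}$ and $\bm{\tau}\bm{b}$ are linearly independent, and a standard Gaussian-binomial count of the $(m-\ell)$-dimensional subspaces of $\mathbb{F}_q^m$ containing both gives
\[
\mathbb{P}\bigl[\bm{\tau}\bm{b} \in \ker(\bm{A}) \bigm| \bm{b}\bigr] = \frac{q^{m-\ell-1}-1}{q^{m-1}-1}.
\]
Because $\bm{b}$ has pairwise distinct nonzero entries, the equation $\bm{\tau}\bm{b} = c\bm{b}$ determines $\tau$ uniquely from $c$ and forces multiplication by $c$ to stabilize the set $S := \{b_1,\dots,b_m\} \subseteq \mathbb{F}_q^*$. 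Hence the number of $\tau \in \mathbb{S}_m$ falling in the first case equals the stabilizer size $k(\bm{b}) := |\{c \in \mathbb{F}_q^* : cS = S\}|$, which depends only on $S$.

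Combining the two cases,
\[
\mathbb{E}\bigl[N_\sol(\bm{A},\bm{B}) \bigm| \bm{b}\bigr] = k(\bm{b}) + \bigl(m! - k(\bm{b})\bigr)\cdot\frac{q^{m-\ell-1}-1}{q^{m-1}-1},
\]
so the remaining task is to compute $\mathbb{E}[k(\bm{b})]$ when $\bm{b}$, and hence $S$, is uniform among $m$-subsets of $\mathbb{F}_q^*$. I would apply Burnside's lemma to the multiplicative action of $\mathbb{F}_q^*$ on such subsets: an element of order $d \mid q-1$ fixes an $m$-subset precisely when $d \mid m$ and the subset is a union of $m/d$ cosets of the order-$d$ subgroup, producing $\binom{(q-1)/d}{m/d}$ fixed subsets, and there are $\varphi(d)$ such elements. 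This yields
\[
\mathbb{E}[k(\bm{b})] = \frac{1}{\binom{q-1}{m}} \sum_{d \,\mid\, \gcd(q-1,m)} \varphi(d) \binom{(q-1)/d}{m/d}.
\]
Multiplying numerator and denominator by $q$ rewrites $(q^{m-\ell-1}-1)/(q^{m-1}-1)$ as $(q^{m-\ell}-q)/(q^m-q)$, and its complement as $(q^m-q^{m-\ell})/(q^m-q)$, whereupon the two contributions combine into exactly \eqref{equ:E-mono-GenPKP-dist-0}.

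The main obstacle I expect is the first step, namely pinning down that the joint law of $(\bm{A},\bm{b})$ under $\GenPKP^*$ is uniform on admissible pairs; once this invariance is established, everything downstream reduces to an elementary Gaussian-binomial count plus a standard Burnside orbit calculation.
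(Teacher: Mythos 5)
Your proposal is correct, and its probabilistic skeleton coincides with the paper's: both reduce $\mathbb{E}[N_\sol(\bm{A},\bm{B})]$ by linearity and a substitution $\rho=\tau\pi$ to a sum over $\tau\in\mathbb{S}_m$ of the probability that a uniformly random rank-$\ell$ matrix annihilating $\bm{b}$ also annihilates $\bm{\tau}\bm{b}$, split according to whether $\bm{b}$ is an eigenvector of $\bm{\tau}$ (your condition $\bm{\tau}\bm{b}\in\operatorname{span}(\bm{b})$ is exactly $\bm{b}\in E_\tau^\star$ in the paper's notation), and your conditional probability $(q^{m-\ell-1}-1)/(q^{m-1}-1)$ agrees with the paper's $(q^{m-\ell}-q)/(q^m-q)$. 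Where you genuinely diverge is in evaluating the combinatorial core, i.e.\ the size of the set of pairs $(\tau,\bm{b})$ with $\bm{b}\in E_\tau^\star$: the paper's Lemma~\ref{lem:sum-E-sigma-dist} fixes $\tau$ and counts its eigenvectors with distinct nonzero entries via the cycle decomposition (all cycles must have length $d=\operatorname{ord}(\lambda)$, each carrying a coset of $\langle\lambda\rangle$), then multiplies by the number of permutations of that cycle type; you instead fix the entry set $S$ and count the scalars $c$ with $cS=S$, observing that each such $c$ determines $\tau$ uniquely because the entries of $\bm{b}$ are distinct and nonzero, and then evaluate $\mathbb{E}[k(\bm{b})]$ by the Cauchy--Frobenius double count $\sum_S|\operatorname{Stab}(S)|=\sum_{c}|\operatorname{Fix}(c)|$ over $m$-subsets of $\mathbb{F}_q^*$. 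The two counts are dual tabulations of the same incidence set, but yours sidesteps the cycle-type enumeration entirely and makes transparent why the answer is an average of multiplicative stabilizer sizes; the paper's version has the advantage of running in parallel with the generating-function argument needed for the non-distinct case (Lemma~\ref{lem:sum-E-sigma}), where your stabilizer trick would not apply since repeated or zero entries break the bijection between scalars $c$ and permutations $\tau$. Your opening step (uniformity of the joint law of $(\bm{A},\bm{b})$ on admissible pairs) is also sound and is the same rejection-sampling observation the paper makes, phrased as conditional uniformity of $\ker(\bm{A})$ among $(m-\ell)$-dimensional subspaces containing $\bm{b}$.
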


As $m - \ell \to +\infty$ (and consequently $q \to +\infty$), the first additive in the right-hand side of~\eqref{equ:E-mono-GenPKP-dist-0} is asymptotic to $m! / q^\ell$, while the second additive term is not exceeding $3$, unless we have that $m = q - 1$ (see Lemma~\ref{lem:sum-phi-binomial-bound}).
Therefore, we can say that Theorem~\ref{thm:E-mono-GenPKP-dist} is in agreement with the heuristic formula.

\section{Proofs}\label{sec:proofs}

\subsection{Preliminaries}

We collect in this section some preliminary lemmas needed later.

\begin{lemma}\label{lem:rank-count}
    Let $m,n,r \geq 0$ be integers.
    Then we have that
    \begin{equation*}
        |\mathbb{F}_q^{m \times n, r}| = \prod_{i \,=\, 0}^{r - 1} \frac{(q^m - q^i)(q^n - q^i)}{q^r - q^i} ,
    \end{equation*}
    with the usual convention that the empty product is equal to $1$.
\end{lemma}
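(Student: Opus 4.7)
The plan is to prove Lemma~\ref{lem:rank-count} by a standard double-counting argument, decomposing a rank-$r$ matrix into the choice of its column space together with the choice of a spanning $n$-tuple of vectors in that column space.

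First, I would handle the edge cases $r = 0$ and $r > \min(m,n)$ separately: in both cases the right-hand side of the formula agrees with the left-hand side (the empty product gives $1$, matching the unique zero matrix when $r=0$; and for $r > \min(m,n)$ one of the factors $q^m - q^i$ or $q^n - q^i$ vanishes, matching the fact that there are no matrices of rank exceeding $\min(m,n)$). Assume henceforth $1 \leq r \leq \min(m,n)$.

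Next, I would partition $\mathbb{F}_q^{m \times n, r}$ according to the column space: a matrix $\bm{M} \in \mathbb{F}_q^{m \times n, r}$ has a well-defined $r$-dimensional column space $V \subseteq \mathbb{F}_q^{m \times 1}$, and conversely any $r$-dimensional subspace $V$ of $\mathbb{F}_q^{m \times 1}$ arises in this way. For a fixed such $V$, the matrices $\bm{M}$ with column space exactly $V$ are in bijection with the ordered $n$-tuples of vectors in $V$ that span $V$, equivalently with $r \times n$ matrices (over $\mathbb{F}_q$, after choosing a basis of $V$) of rank $r$. The number of $r \times n$ matrices of rank $r$ is $\prod_{i=0}^{r-1}(q^n - q^i)$, obtained by choosing the columns one at a time so that each new column lies outside the span of the previous ones (which has size $q^i$ after $i$ columns).

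It then remains to count the number of $r$-dimensional subspaces of $\mathbb{F}_q^{m \times 1}$, that is, the Gaussian binomial coefficient. One counts ordered bases of such a subspace: there are $\prod_{i=0}^{r-1}(q^m - q^i)$ linearly independent ordered $r$-tuples in $\mathbb{F}_q^{m \times 1}$, and each $r$-dimensional subspace has exactly $\prod_{i=0}^{r-1}(q^r - q^i)$ ordered bases (the order of $GL_r(\mathbb{F}_q)$). Dividing gives $\prod_{i=0}^{r-1}(q^m - q^i)/(q^r - q^i)$ subspaces. Multiplying by the count of spanning $n$-tuples per subspace yields
\begin{equation*}
    |\mathbb{F}_q^{m \times n, r}| = \prod_{i=0}^{r-1} \frac{q^m - q^i}{q^r - q^i} \cdot \prod_{i=0}^{r-1}(q^n - q^i) = \prod_{i=0}^{r-1} \frac{(q^m - q^i)(q^n - q^i)}{q^r - q^i},
\end{equation*}
which is the claimed identity.

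I do not anticipate a genuine obstacle here, since the statement is a classical linear-algebra enumeration; the only mild subtlety is making sure the edge cases ($r = 0$, or $r$ exceeding $\min(m,n)$) are reconciled with the convention that the empty product equals $1$, and that the two nested product-of-ratios manipulations are written in a way that avoids dividing by zero when $r > \min(m,n)$. Working in the range $1 \leq r \leq \min(m,n)$ before extending by the convention cleanly sidesteps this.
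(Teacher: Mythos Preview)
Your argument is correct and is exactly the standard double-counting proof of this classical identity. The paper does not actually give a proof but simply cites a reference (Fisher and Alexander), so your write-up supplies the details the paper omits; there is no methodological difference to compare.
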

\begin{proof}
    See, e.g., \cite{MR1533848}.
\end{proof}

\begin{lemma}\label{lem:prob-first-columns-rank}
    Let $\ell, m, m_1, m_2 \geq 0$ be integers with $\ell \leq m = m_1 + m_2$.
    Let $\bm{A} \gets \mathbb{F}_q^{\ell \times m, \ell}$ and write $\bm{A} = (\bm{A}_1 \mid \bm{A}_2)$ where $\bm{A}_1 \in \mathbb{F}_q^{\ell \times m_1}$ and $\bm{A}_2 \in \mathbb{F}_q^{\ell \times m_2}$.
    Then
    \begin{equation*}
        \mathbb{P}\big[\rank(\bm{A}_1) = r\big] = \frac{|\mathbb{F}_q^{\ell \times m_1, r}| \, |\mathbb{F}_q^{(\ell-r) \times m_2, \ell - r}| \, q^{m_2 r}}{|\mathbb{F}_q^{\ell \times m, \ell}|} ,
    \end{equation*}
    for every integer $r$ with $0 \leq r \leq \min(\ell, m_1)$.
\end{lemma}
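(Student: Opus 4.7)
The plan is to compute the probability as a ratio: the denominator is already given as $|\mathbb{F}_q^{\ell\times m,\ell}|$, so I would count the number of pairs $(\bm{A}_1,\bm{A}_2)\in \mathbb{F}_q^{\ell\times m_1}\times\mathbb{F}_q^{\ell\times m_2}$ such that $\rank(\bm{A}_1)=r$ and the concatenated matrix $\bm{A}=(\bm{A}_1\mid\bm{A}_2)$ has rank~$\ell$. The first step would be to fix a matrix $\bm{A}_1\in \mathbb{F}_q^{\ell\times m_1,r}$ (contributing the factor $|\mathbb{F}_q^{\ell\times m_1,r}|$) and then count, independently of this choice, the number of $\bm{A}_2\in\mathbb{F}_q^{\ell\times m_2}$ making the full matrix of rank $\ell$.

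To carry out the inner count, I would work on the left-kernel side. Since $\rank(\bm{A}_1)=r$, the space $V:=\lker(\bm{A}_1)\subseteq\mathbb{F}_q^{1\times\ell}$ has dimension $\ell-r$. A short computation shows
\begin{equation*}
\lker(\bm{A}) \;=\; \lker(\bm{A}_1)\cap\lker(\bm{A}_2) \;=\; \{\bm{x}\in V : \bm{x}\bm{A}_2=\bm{0}\},
\end{equation*}
so $\bm{A}$ has rank $\ell$ exactly when the restriction to $V$ of the linear map $T_{\bm{A}_2}\colon\mathbb{F}_q^{1\times\ell}\to\mathbb{F}_q^{1\times m_2}$, $\bm{x}\mapsto\bm{x}\bm{A}_2$, is injective. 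This reformulation is the key observation and is also the main (mild) obstacle, since one must convince oneself that the subsequent count does not depend on the particular $\bm{A}_1$ chosen, only on its rank.

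Next I would fix a basis $\bm{v}_1,\dots,\bm{v}_{\ell-r}$ of $V$ and extend it by vectors $\bm{w}_1,\dots,\bm{w}_r$ to a basis of $\mathbb{F}_q^{1\times\ell}$. Because $\bm{A}_2$ is determined uniquely by the images $T_{\bm{A}_2}(\bm{v}_i)$ and $T_{\bm{A}_2}(\bm{w}_j)$, and these images can be chosen freely in $\mathbb{F}_q^{1\times m_2}$, counting reduces to: the tuple $(T_{\bm{A}_2}(\bm{v}_1),\dots,T_{\bm{A}_2}(\bm{v}_{\ell-r}))$ must be linearly independent in $\mathbb{F}_q^{1\times m_2}$, while the tuple $(T_{\bm{A}_2}(\bm{w}_1),\dots,T_{\bm{A}_2}(\bm{w}_r))$ is unconstrained. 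The first contributes $\prod_{i=0}^{\ell-r-1}(q^{m_2}-q^i)$ choices and the second $q^{r m_2}$ choices. By Lemma~\ref{lem:rank-count} (applied with parameters $(\ell-r,m_2,\ell-r)$, in which the quotient telescopes), the first product equals $|\mathbb{F}_q^{(\ell-r)\times m_2,\ell-r}|$.

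Multiplying all factors and dividing by $|\mathbb{F}_q^{\ell\times m,\ell}|$ yields exactly the stated expression. The only routine verifications are the kernel identity $\lker(\bm{A})=\lker(\bm{A}_1)\cap\lker(\bm{A}_2)$ and the simplification of Lemma~\ref{lem:rank-count} noted above; everything else is bookkeeping.
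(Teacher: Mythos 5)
Your proof is correct and follows essentially the same route as the paper: both arguments count the pairs $(\bm{A}_1,\bm{A}_2)$ by fixing $\bm{A}_1$ of rank $r$ and showing that the number of admissible $\bm{A}_2$ equals $|\mathbb{F}_q^{(\ell-r)\times m_2,\ell-r}|\,q^{m_2 r}$. The only difference is one of dual perspective: the paper justifies the inner count by projecting the columns of $\bm{A}_2$ onto the quotient of the column space of $\bm{A}$ by that of $\bm{A}_1$, whereas you work with the restriction of $\bm{x}\mapsto\bm{x}\bm{A}_2$ to the left kernel of $\bm{A}_1$ and a basis adapted to it; the two bookkeepings are transposes of one another, both valid, and your version makes explicit (via $\dim V=\ell-r$) why the count depends only on the rank of $\bm{A}_1$.
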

\begin{proof}
    Let us count the number of $\bm{A} = (\bm{A}_1 \mid \bm{A}_2) \in \mathbb{F}_q^{\ell \times m, \ell}$ such that $\rank(\bm{A}_1) = r$.
    Clearly, there are $|\mathbb{F}_q^{\ell \times m_1, r}|$ choices for $\bm{A}_1$.
    We have to determine the number of possible choices for $\bm{A}_2$.
    Let $C$ and $C_1$ be the columnspaces of $\bm{A}$ and $\bm{A}_1$, respectively.
    Hence, we have that $\dim(C) = \ell$, $\dim(C_1) = r$, and $C_1 \subseteq C$.
    Consequently, the quotient space $C / C_1$ can be identified with $\mathbb{F}_q^{(\ell - r) \times 1}$ via the choice of a basis.
    Let $\rho \colon C \to \mathbb{F}_q^{(\ell - r) \times 1}$ be the natural projection onto the quotient space, and let $\rho(\bm{A}_2)$ be the $(\ell - r) \times m_2$ matrix obtained by applying $\rho$ to each column of $\bm{A}_2$.
    Since the columns of $\bm{A}$ generates $C$, it follows easily that the columns of $\rho(\bm{A}_2)$ generates $\mathbb{F}_q^{(\ell - r) \times 1}$. Hence, we have that $\rank(\rho(\bm{A}_2)) = \ell - r$.
    Moreover, since the kernel of $\rho$ has dimension $r$, for each choice of $\rho(\bm{A}_2) \in \mathbb{F}_q^{(\ell - r) \times m_2, \ell - r}$ there correspond $q^{m_2 r}$ choices of $\bm{A}_2 \in \mathbb{F}_q^{\ell \times m_2}$.
    Thus we get that there are $|\mathbb{F}_q^{(\ell-r) \times m_2, \ell - r}| \, q^{m_2 r}$ choices for $\bm{A}_2$.
    The main claim follows.
\end{proof}

\begin{lemma}\label{lem:prob-zero-products}
    Let $\ell,m,m^\prime,n,s \geq 0$ be integers such that $s \leq \min(m, m^\prime)$, $\ell \leq m$, and $n \leq m^\prime$, and let $\bm{M} \in \mathbb{F}_q^{m \times m^\prime, s}$.
    If $(\bm{A}, \bm{B}) \gets \mathbb{F}_q^{\ell \times m, \ell} \times \mathbb{F}_q^{m^\prime \times n, n}$ then
    \begin{enumerate}
        \item\label{ite:prob-zero-products-1} $\mathbb{P}[\bm{A}\bm{M} = \bm{0}] = \prod_{i = 0}^{\ell - 1} \frac{q^{m - s} - q^i}{q^{m} - q^i}$;
        \item\label{ite:prob-zero-products-2} $\mathbb{P}[\bm{M}\bm{B} = \bm{0}] = \prod_{i = 0}^{n - 1} \frac{q^{m^\prime - s} - q^i}{q^{m^\prime} - q^i}$;
        \item\label{ite:prob-zero-products-3} $\mathbb{P}[\bm{A}\bm{M}\bm{B} = \bm{0}] = \sum_{r = 0}^{\min(\ell, s)} \frac{|\mathbb{F}_q^{\ell \times s, r}| \, |\mathbb{F}_q^{(\ell-r) \times (m-s), \ell - r}| \, q^{(m-s) r}}{|\mathbb{F}_q^{\ell \times m, \ell}|} \prod_{i = 0}^{n - 1} \frac{q^{m^\prime - r} - q^i}{q^{m^\prime} - q^i}$.
    \end{enumerate}
\end{lemma}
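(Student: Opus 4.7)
The plan is to handle the three statements in the natural order, with Part~(iii) reducing to Part~(ii) together with a separate distributional computation of $\rank(\bm{A}\bm{M})$.

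For Part~(i), I would observe that $\bm{A}\bm{M} = \bm{0}$ holds iff every row of $\bm{A}$ lies in $\lker(\bm{M})$, a subspace of $\mathbb{F}_q^{1 \times m}$ of dimension $m - s$. Counting $\ell$-tuples of linearly independent vectors inside this subspace gives $\prod_{i=0}^{\ell-1}(q^{m-s} - q^i)$, and dividing by $|\mathbb{F}_q^{\ell \times m, \ell}| = \prod_{i=0}^{\ell-1}(q^m - q^i)$ (Lemma~\ref{lem:rank-count}) yields the claimed ratio. Part~(ii) is obtained either by the mirror argument applied to the columns of $\bm{B}$ lying in the right kernel of $\bm{M}$ (which has dimension $m'-s$), or directly by transposition: $\bm{M}\bm{B} = \bm{0}$ iff $\bm{B}^\transpose \bm{M}^\transpose = \bm{0}$, and $\bm{B}^\transpose$ is uniform on $\mathbb{F}_q^{n \times m', n}$ while $\bm{M}^\transpose \in \mathbb{F}_q^{m' \times m, s}$, so Part~(i) applies with roles permuted.

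For Part~(iii), since $\bm{A}$ and $\bm{B}$ are independent, I would condition on $\bm{A}$ and set $\bm{N} := \bm{A}\bm{M} \in \mathbb{F}_q^{\ell \times m'}$. Part~(ii) applied with $\bm{N}$ in place of $\bm{M}$ gives
\begin{equation*}
    \mathbb{P}[\bm{N}\bm{B} = \bm{0} \mid \bm{A}] = \prod_{i=0}^{n-1} \frac{q^{m' - r} - q^i}{q^{m'} - q^i},
\end{equation*}
where $r := \rank(\bm{A}\bm{M})$, so the conditional probability depends on $\bm{A}$ only through this rank. Therefore Part~(iii) will follow once I know the distribution of $r$, and the remaining task is precisely to show that
\begin{equation*}
    \mathbb{P}\big[\rank(\bm{A}\bm{M}) = r\big] = \frac{|\mathbb{F}_q^{\ell \times s, r}| \, |\mathbb{F}_q^{(\ell-r) \times (m-s), \ell - r}| \, q^{(m-s) r}}{|\mathbb{F}_q^{\ell \times m, \ell}|}.
\end{equation*}

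To obtain this distribution, I would factor $\bm{M} = \bm{P}\bm{Q}$ with $\bm{P} \in \mathbb{F}_q^{m \times s, s}$ and $\bm{Q} \in \mathbb{F}_q^{s \times m', s}$ (possible since $\rank(\bm{M}) = s$). Right multiplication by $\bm{Q}$ is an injective linear map from $\mathbb{F}_q^{1 \times s}$ to $\mathbb{F}_q^{1 \times m'}$ (as $\bm{Q}$ has full row rank), so $\rank(\bm{A}\bm{M}) = \rank(\bm{A}\bm{P})$. Since $\bm{P}$ has rank $s$, I can choose $\bm{U} \in \mathbb{F}_q^{m \times m}$ invertible with $\bm{U}\bm{P} = \left(\begin{smallmatrix} \bm{I}_s \\ \bm{0} \end{smallmatrix}\right)$; because right multiplication by $\bm{U}^{-1}$ is a bijection on $\mathbb{F}_q^{\ell \times m, \ell}$, the matrix $\bm{A}' := \bm{A}\bm{U}^{-1}$ is again uniform on $\mathbb{F}_q^{\ell \times m, \ell}$, and $\bm{A}\bm{P} = \bm{A}'\left(\begin{smallmatrix} \bm{I}_s \\ \bm{0} \end{smallmatrix}\right)$ is exactly the first $s$ columns of $\bm{A}'$. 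Lemma~\ref{lem:prob-first-columns-rank} with $m_1 = s$ and $m_2 = m - s$ then gives the displayed probability, and summing the product of this with the conditional probability above over $r \in \{0, \dots, \min(\ell,s)\}$ produces the formula in Part~(iii). The only nontrivial step is the invariance argument reducing $\rank(\bm{A}\bm{M})$ to the rank of a submatrix of a uniformly distributed full-rank matrix; everything else is bookkeeping.
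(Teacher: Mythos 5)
Your proposal is correct and follows essentially the same route as the paper: a direct kernel-counting argument for one of parts (i)/(ii) with the other obtained by transposition, and for part (iii) a rank factorization of $\bm{M}$ reducing $\rank(\bm{A}\bm{M})$ to the rank of the first $s$ columns of a uniform full-rank matrix, combined with Lemma~\ref{lem:prob-first-columns-rank} and part (ii) via independence of $\bm{A}$ and $\bm{B}$. The only cosmetic difference is that the paper proves (ii) directly and deduces (i), whereas you do the reverse.
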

\begin{proof}
    First, note that \ref{ite:prob-zero-products-1} follows from \ref{ite:prob-zero-products-2} since $\bm{A}\bm{M} = \bm{0}$ is equivalent to $\bm{M}^\transpose \bm{A}^\transpose = \bm{0}$ and transposition does not change the rank.

    Let us prove \ref{ite:prob-zero-products-2}.
    We have that $\bm{M}\bm{B} = \bm{0}$ is equivalent to each column of $\bm{B}$ belonging to the kernel of $\bm{M}$, which has dimension $m^\prime - s$.
    Thus there are $|\mathbb{F}_q^{(m^\prime - s) \times n, n}|$ possible choices for $\bm{B}$.
    Hence, using Lemma~\ref{lem:rank-count}, we get that
    \begin{equation*}
        \mathbb{P}[\bm{M}\bm{B} = \bm{0}]
        = \frac{|\mathbb{F}_q^{(m^\prime - s) \times n, n}|}{|\mathbb{F}_q^{m^\prime \times n, n}|}
        = \prod_{i \,=\, 0}^{n - 1} \frac{q^{m^\prime - s} - q^i}{q^{m^\prime} - q^i}
    \end{equation*}
    which proves \ref{ite:prob-zero-products-2}.

    It remains to prove \ref{ite:prob-zero-products-3}.
    Since $\rank(\bm{M}) = s$, there exist $\bm{Q} \in \mathbb{F}_q^{m \times m, m}$ and $\bm{R} \in \mathbb{F}_q^{s \times m^\prime, s}$ such that
    \begin{equation*}
        \bm{M} = \bm{Q} \begin{pmatrix} \bm{R} \\ \bm{0} \end{pmatrix} .
    \end{equation*}
    Let $\widetilde{\bm{A}} \gets \mathbb{F}_q^{\ell \times m, \ell}$ and write $\widetilde{\bm{A}} = (\widetilde{\bm{A}}_1 \mid \widetilde{\bm{A}}_2)$ where $\widetilde{\bm{A}}_1 \in \mathbb{F}_q^{\ell \times s}$ and $\widetilde{\bm{A}}_2 \in \mathbb{F}_q^{\ell \times (m-s)}$.
    Since $\bm{A}\bm{Q} \eqdistr \widetilde{\bm{A}}$ and $\rank(\widetilde{\bm{A}}_1\bm{R}) = \rank(\widetilde{\bm{A}}_1) \leq \min(\ell, s)$, we have that
    \begin{align}\label{equ:prob-zero-products-1}
        \mathbb{P}[\bm{A}\bm{M}\bm{B} = \bm{0}]
        &= \mathbb{P}\big[\bm{A}\bm{Q} \!\begin{pmatrix} \bm{R} \\ \bm{0} \end{pmatrix}\! \bm{B} = \bm{0}\big]
        = \mathbb{P}\big[\widetilde{\bm{A}} \!\begin{pmatrix} \bm{R} \\ \bm{0} \end{pmatrix}\! \bm{B} = \bm{0}\big]
        = \mathbb{P}[\widetilde{\bm{A}}_1\bm{R}\bm{B} = \bm{0}] \nonumber\\
        &= \sum_{r \,=\, 0}^{\min(\ell, s)} \sum_{\bm{N} \,\in\, \mathbb{F}_q^{\ell \times m^\prime\!, r}} \mathbb{P}[\widetilde{\bm{A}}_1\bm{R} = \bm{N}] \, \mathbb{P}[\bm{N}\bm{B} = \bm{0}] .
    \end{align}
    By \ref{ite:prob-zero-products-2}, we have that
    \begin{equation}\label{equ:prob-zero-products-2}
        \mathbb{P}[\bm{N}\bm{B} = \bm{0}] = \prod_{i \,=\, 0}^{n - 1} \frac{q^{m^\prime - r} - q^i}{q^{m^\prime} - q^i} ,
    \end{equation}
    while, using again the fact that $\rank(\widetilde{\bm{A}}_1\bm{R}) = \rank(\widetilde{\bm{A}}_1)$ and employing Lemma~\ref{lem:prob-first-columns-rank}, we get that
    \begin{align}\label{equ:prob-zero-products-3}
        \sum_{\bm{N} \,\in\, \mathbb{F}_q^{\ell \times m^\prime\!, r}}&   \mathbb{P}[\widetilde{\bm{A}}_1\bm{R} = \bm{N}]
        = \mathbb{P}[\rank(\widetilde{\bm{A}}_1\bm{R}) = r] \nonumber\\
        &= \mathbb{P}[\rank(\widetilde{\bm{A}}_1) = r]
        = \frac{|\mathbb{F}_q^{\ell \times s, r}| \, |\mathbb{F}_q^{(\ell-r) \times (m-s), \ell - r}| \, q^{(m-s) r}}{|\mathbb{F}_q^{\ell \times m, \ell}|} .
    \end{align}
    Thus, putting together \eqref{equ:prob-zero-products-1}, \eqref{equ:prob-zero-products-2}, and \eqref{equ:prob-zero-products-3}, we obtain \ref{ite:prob-zero-products-3}.
\end{proof}

For each permutation $\sigma \in \mathbb{S}_m$ and for each $\lambda \in \mathbb{F}_q^*$, let $E_{\sigma, \lambda}$ be the set of $\bm{x} \in \mathbb{F}_q^{m \times 1, 1}$ such that $\bm{\sigma}\bm{x} = \lambda \bm{x}$.
Moreover, let $E_\sigma := \bigcup_{\lambda \in \mathbb{F}_q^*} E_{\sigma, \lambda}$.
In other words, we have that $E_{\sigma, \lambda}$ is the set of eigenvectors of $\bm{\sigma}$ with eigenvalue $\lambda$, if $\lambda$ is an eigenvalue of $\bm{\sigma}$, otherwise $E_{\sigma, \lambda} = \varnothing$.
(Note that $\lambda = 0$ is not an eigenvalue of $\bm{\sigma}$, since $\bm{\sigma}$ is invertible.)
Furthermore, let $E_{\sigma, \lambda}^\star := E_{\sigma, \lambda} \cap \mathbb{F}_q^{\star m \times 1, 1}$ and $E_\sigma^\star := \bigcup_{\lambda \in \mathbb{F}_q^*} E_{\sigma, \lambda}^\star$.

\begin{lemma}\label{lem:sum-E-sigma}
    For each positive integer $m$, we have that
    \begin{equation}\label{equ:sum-E-sigma-0}
        \sum_{\sigma \,\in\, \mathbb{S}_m} |E_\sigma| = m! \left(\sum_{d \,\mid\, q - 1} \varphi(d) \binom{\lfloor (q + m - 1)/d \rfloor}{\lfloor m/d \rfloor }  - q + 1\right) .
    \end{equation}
\end{lemma}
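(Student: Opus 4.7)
The plan is to compute $|E_{\sigma,\lambda}|$ explicitly from the cycle decomposition of $\sigma$, then collect terms by grouping elements of $\mathbb{F}_q^*$ by their multiplicative order, and evaluate the resulting sum over $\sigma$ via the exponential formula for cycle statistics.

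First, I would unpack the eigenvector equation in coordinates. Since the $(i,j)$ entry of $\bm{\sigma}$ equals $\delta_{\sigma(i),j}$, the condition $\bm{\sigma}\bm{x}=\lambda\bm{x}$ reads $x_{\sigma(i)}=\lambda x_i$ for every $i$. Traversing a single cycle $(i_1,\dots,i_k)$ of $\sigma$ gives $x_{i_j}=\lambda^{j-1}x_{i_1}$ together with the consistency condition $\lambda^k x_{i_1}=x_{i_1}$. Writing $d=\mathrm{ord}(\lambda)$, this forces either $x_{i_1}=0$ (in which case $\bm{x}$ vanishes on the whole cycle) or $d\mid k$, in which case the restriction of $\bm{x}$ to the cycle is freely determined by the single value $x_{i_1}\in\mathbb{F}_q$. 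Letting $c_d(\sigma)$ denote the number of cycles of $\sigma$ whose length is divisible by $d$, and excluding the all-zero vector, I obtain
\[
|E_{\sigma,\lambda}| \;=\; q^{c_d(\sigma)}-1.
\]

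Second, since every nonzero eigenvector has a unique eigenvalue and $\mathbb{F}_q^*$ contains exactly $\varphi(d)$ elements of order $d$ for each $d\mid q-1$, summing over $\lambda$ yields $|E_\sigma|=\sum_{d\mid q-1}\varphi(d)\bigl(q^{c_d(\sigma)}-1\bigr)$. Summing over $\sigma$ and using $\sum_{d\mid q-1}\varphi(d)=q-1$ then reduces \eqref{equ:sum-E-sigma-0} to establishing, for every $d\mid q-1$, the identity
\[
\sum_{\sigma\in\mathbb{S}_m} q^{c_d(\sigma)} \;=\; m!\,\binom{\lfloor(q+m-1)/d\rfloor}{\lfloor m/d\rfloor} .
\]

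Third, I would prove this identity via the exponential formula for permutation cycle statistics: weighting each cycle of length $k$ by $q$ when $d\mid k$ and by $1$ otherwise gives
\[
\sum_{m\geq 0}\bigg(\sum_{\sigma\in\mathbb{S}_m}q^{c_d(\sigma)}\bigg)\frac{z^m}{m!}
= \exp\!\bigg(\sum_{k\geq 1}\frac{z^k}{k}+(q-1)\sum_{j\geq 1}\frac{z^{dj}}{dj}\bigg)
= \frac{1}{(1-z)(1-z^d)^{(q-1)/d}} ,
\]
using $\sum_{j\geq 1}z^{dj}/(dj)=-\tfrac{1}{d}\log(1-z^d)$. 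Expanding $(1-z^d)^{-(q-1)/d}$ via the negative binomial series, multiplying by $(1-z)^{-1}$, and applying the hockey-stick identity then give
\[
[z^m]\,\frac{1}{(1-z)(1-z^d)^{(q-1)/d}}
= \sum_{j=0}^{\lfloor m/d\rfloor}\binom{j+(q-1)/d-1}{j}
= \binom{\lfloor m/d\rfloor+(q-1)/d}{\lfloor m/d\rfloor} .
\]
Since $d\mid q-1$ we have $\lfloor m/d\rfloor+(q-1)/d=\lfloor(q+m-1)/d\rfloor$, which closes the chain of reductions. The main obstacle is identifying the correct generating function and simplifying the resulting logarithms; the cycle-level analysis in step one and the final binomial manipulation are otherwise routine.
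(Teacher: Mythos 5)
Your proposal is correct and follows essentially the same route as the paper: the same cycle-by-cycle determination of $|E_{\sigma,\lambda}|=q^{c_d(\sigma)}-1$, the same grouping of eigenvalues by multiplicative order using $\sum_{d\mid q-1}\varphi(d)=q-1$, and the same exponential-formula computation yielding $\big((1-z)(1-z^d)^{(q-1)/d}\big)^{-1}$ followed by the hockey-stick identity. No gaps.
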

\begin{proof}
    Let $\sigma \in \mathbb{S}_m$, $\lambda \in \mathbb{F}_q^*$, and $\bm{x} \in \mathbb{F}_q^{m \times 1, 1}$.
    Write $\sigma = \gamma_1 \cdots \gamma_t$ and $\bm{x} = (x_1, \dots, x_m)^\transpose$, where $\gamma_1, \dots, \gamma_t$ are disjoint cycles and $x_1, \dots, x_m \in \mathbb{F}_q$.
    By ``$\bm{x}$ over $\gamma_k$'' we mean the sequence $(x_j)$ where $j$ runs over the orbit of the elements moved by $\gamma_k$.
    Also, we let $|\gamma_k|$ denote the length of the cycle $\gamma_k$.
    It follows easily that $\bm{x} \in E_{\sigma, \lambda}$ if and only if, for each $k \in \{1, \dots, t\}$, we have that either $\lambda^{|\gamma_k|} = 1$ and $\bm{x}$ over $\gamma_k$ is a geometric progression of ratio $\lambda$, or $\lambda^{|\gamma_k|} \neq 1$ and $\bm{x}$ over $\gamma_k$ is a zero sequence.
    Hence, we have that
    \begin{equation*}
        |E_{\sigma, \lambda}| = q^{|\{k \,:\, \lambda^{|\gamma_k|} \,=\, 1\}|} - 1 .
    \end{equation*}
    Therefore, since the $E_\sigma = \bigcup_{\lambda \in \mathbb{F}_q^*} E_{\sigma, \lambda}$ is a disjoint union, we get that
    \begin{equation}\label{equ:sum-E-sigma-1}
        |E_\sigma| = \sum_{\lambda \,\in\, \mathbb{F}_q^*} \left(q^{|\{k \,:\, \lambda^{|\gamma_k|} \,=\, 1\}|} - 1\right) = \sum_{d \,\mid\, q - 1} \varphi(d) \, q^{|\{k \,:\, d \text{ divides } |\gamma_k|\}|} - q + 1 ,
    \end{equation}
    where we used the facts that: $\lambda^{|\gamma_k|} \,=\, 1$ if and only if the multiplicative order of $\lambda$ divides $|\gamma_k|$; and there are $\varphi(d)$ elements of order $d$ in the multiplicative (cyclic) group $\mathbb{F}_q^*$.

    At this point, we employ the theory of combinatorial classes and their generating functions \cite[Part A, Chapters I and II]{MR2483235}.
    Letting
    \begin{equation*}
        F_d(z) := \log\!\left(\frac1{1 - z}\right) - \frac1{d} \log\!\left(\frac1{1 - z^d}\right)
        \quad\text{ and }\quad
        G_d(z) := \frac1{d}\log\!\left(\frac1{1 - z^d}\right) ,
    \end{equation*}
    we have that $F_d(z)$, respectively $G_d(z)$, is the exponential generating function of the combinatorial class of cycles with length not divisible, respectively divisible, by $d$.
    Hence, since each permutation can be uniquely written as a product of disjoint cycles, we have that
    \begin{align*}
        \sum_{m \,=\, 0}^\infty &\left(\sum_{\sigma \,=\, \gamma_1 \cdots\, \gamma_k \,\in\, \mathbb{S}_m} q^{|\{k \,:\, d \text{ divides } |\gamma_k|\}|}\right) \frac{z^m}{m!}
        = \exp\!\big(F_d(z) + G_d(z) q\big) \\
        &= \frac1{1 - z} \cdot \frac1{(1 - z^d)^{(q - 1)/d}}
        = \sum_{i \,=\, 0}^\infty z^i \sum_{j \,=\, 0}^\infty \binom{(q - 1)/d + j - 1}{j} z^{dj} \\
        &= \sum_{m \,=\, 0}^\infty \sum_{j \,=\, 0}^{\lfloor m / d\rfloor} \binom{(q - 1)/d + j - 1}{j} z^m
        = \sum_{m \,=\, 0}^\infty \binom{\lfloor (q + m - 1)/d \rfloor}{ \lfloor m / d\rfloor} z^m ,
    \end{align*}
    where we used the identity $\sum_{j=0}^r \binom{s + j - 1}{j} = \binom{s + r}{r}$, which holds for all integers $r, s \geq 0$.
    Therefore, we get that
    \begin{equation}\label{equ:sum-E-sigma-2}
        \sum_{\sigma \,=\, \gamma_1 \cdots\, \gamma_k \,\in\, \mathbb{S}_m} q^{|\{k \,:\, d \text{ divides } |\gamma_k|\}|} = m! \binom{\lfloor (q + m - 1)/d \rfloor}{ \lfloor m / d\rfloor} .
    \end{equation}
    Finally, summing \eqref{equ:sum-E-sigma-1} over $\sigma \in \mathbb{S}_m$, and employing \eqref{equ:sum-E-sigma-2}, we obtain \eqref{equ:sum-E-sigma-0}, as desired.
\end{proof}

\begin{lemma}\label{lem:sum-E-sigma-dist}
    For each positive integer $m$, we have that
    \begin{equation}\label{lem:sum-E-sigma-dist-0}
        \sum_{\sigma \,\in\, \mathbb{S}_m} |E_\sigma^\star| = m! \sum_{d \,\mid\, \gcd(q - 1, m)} \varphi(d) \binom{(q-1)/d}{m/d} .
    \end{equation}
\end{lemma}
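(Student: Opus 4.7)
The plan is to adapt the analysis of $|E_{\sigma,\lambda}|$ from the proof of Lemma~\ref{lem:sum-E-sigma} to the starred setting, where we additionally demand that the entries of $\bm{x}$ be pairwise distinct (and nonzero). Writing $\sigma = \gamma_1 \cdots \gamma_t$ as a product of disjoint cycles and recalling that $\bm{x} \in E_{\sigma, \lambda}$ forces $\bm{x}$ over each $\gamma_k$ to be either a geometric progression of ratio $\lambda$ (when $\lambda^{|\gamma_k|} = 1$) or the zero sequence, the constraint $\bm{x} \in \mathbb{F}_q^{\star m \times 1, 1}$ rules out the second case. So I first argue that $E_{\sigma,\lambda}^\star$ is nonempty only if $\lambda^{|\gamma_k|} = 1$ for every $k$ \emph{and} the entries within each cycle are distinct. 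Since the $|\gamma_k|$ entries are $a, \lambda a, \dots, \lambda^{|\gamma_k|-1} a$ for some $a \neq 0$, distinctness within the cycle forces $\mathrm{ord}(\lambda) \geq |\gamma_k|$; combined with $\lambda^{|\gamma_k|} = 1$, this gives $\mathrm{ord}(\lambda) = |\gamma_k|$. Hence all cycles of $\sigma$ must have the same length $d := \mathrm{ord}(\lambda)$, which in particular forces $d \mid m$ and $d \mid q-1$, i.e.\ $d \mid \gcd(q-1, m)$.

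Next I count $|E_{\sigma, \lambda}^\star|$ for such a compatible pair $(\sigma, \lambda)$. The key observation is that the entries of $\bm{x}$ on each cycle $\gamma_k$ are exactly a coset of the subgroup $\langle \lambda \rangle \leq \mathbb{F}_q^*$ (namely the coset $a \langle \lambda \rangle$, where $a$ is the value at any chosen starting position). For the entire vector $\bm{x}$ to have pairwise distinct nonzero entries, these $t := m/d$ cosets must be pairwise distinct. Since $\mathbb{F}_q^*$ has exactly $(q-1)/d$ cosets of $\langle \lambda \rangle$, the number of ways to assign a distinct coset to each of the $t$ cycles is $\binom{(q-1)/d}{t} t!$, and independently each cycle contributes $d$ choices for its starting value within the chosen coset. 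This yields
\begin{equation*}
    |E_{\sigma, \lambda}^\star| = d^t \binom{(q-1)/d}{t} t! .
\end{equation*}

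To finish, I sum first over $\lambda$ and then over $\sigma$. For fixed $d \mid \gcd(q-1, m)$, there are $\varphi(d)$ elements $\lambda \in \mathbb{F}_q^*$ of order $d$, and the number of permutations $\sigma \in \mathbb{S}_m$ all of whose cycles have length $d$ is $\frac{m!}{d^t t!}$. Multiplying these together gives
\begin{equation*}
    \sum_{\substack{\sigma \,\in\, \mathbb{S}_m \\ \text{cycles of length } d}} |E_\sigma^\star| = \frac{m!}{d^t t!} \cdot \varphi(d) \cdot d^t \binom{(q-1)/d}{t} t! = m! \, \varphi(d) \binom{(q-1)/d}{m/d} ,
\end{equation*}
and summing over admissible $d$ recovers \eqref{lem:sum-E-sigma-dist-0}. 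The only mildly delicate step is the case analysis in the first paragraph that forces all cycles to have length equal to $\mathrm{ord}(\lambda)$; the rest is bookkeeping with the coset structure of $\mathbb{F}_q^*$ modulo $\langle \lambda \rangle$.
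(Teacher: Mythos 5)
Your proposal is correct and follows essentially the same route as the paper's proof: characterizing $E_{\sigma,\lambda}^\star$ via disjoint cosets of $\langle\lambda\rangle$ in $\mathbb{F}_q^*$, obtaining $|E_{\sigma,\lambda}^\star| = d^{m/d}\binom{(q-1)/d}{m/d}(m/d)!$ when all cycles of $\sigma$ have length $d = \mathrm{ord}(\lambda)$ (and $0$ otherwise), and then multiplying by $\varphi(d)$ and by the count $m!/(d^{m/d}(m/d)!)$ of permutations with all cycles of length $d$. Your explicit justification that distinctness within a cycle forces $\mathrm{ord}(\lambda) = |\gamma_k|$ is a slightly more detailed version of the step the paper states as ``it follows easily.''
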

\begin{proof}
    The proof is similar to that of Lemma~\ref{lem:sum-E-sigma}.
    Let $\sigma \in \mathbb{S}_m$, $\lambda \in \mathbb{F}_q^*$, and $\bm{x} \in \mathbb{F}_q^{\star m \times 1, 1}$.
    Write $\sigma = \gamma_1 \cdots \gamma_t$, where $\gamma_1, \dots, \gamma_t$ are disjoint cycles, and let $d$ be the multiplicative order of $\lambda$.
    It follows easily that $\bm{x} \in E_{\sigma, \lambda}^\star$ if and only if: for each $k \in \{1,\dots, t\}$ we have that $|\gamma_k| = d$ and $\bm{x}$ over $\gamma_k$ is a nonzero geometric progression of ratio $\lambda$; and the aforementioned geometric progressions are disjoint.
    In other words, we have that $\bm{x} \in E_{\sigma, \lambda}^\star$ if and only if $\bm{x}$ over $\gamma_k$, for $k \in \{1, \dots, t\}$, are disjoint cosets of the quotient group $\mathbb{F}_q^* / \langle \lambda\rangle$.
    Consequently, we get that $|E_{\sigma, \lambda}^\star| = 0$ if $|\gamma_k| \neq d$ for some $k$, while
    \begin{equation}\label{equ:sum-E-sigma-dist-1}
        |E_{\sigma, \lambda}^\star| = \binom{(q - 1)/d}{m / d} (m / d)! \cdot d^{m / d}
    \end{equation}
    if $|\gamma_k| = d$ for each $k \in \{1, \dots, t\}$ (and so $t = m/d$).
    More precisely, the first factor in \eqref{equ:sum-E-sigma-dist-1} is equal to the number of choices of one of the $(q-1)/d$ classes of $\mathbb{F}_q^* / \langle \lambda\rangle$ for each of the $m/d$ cycles of $\sigma$, while the second factor is equal to the number of choices of one of the $d$ elements of each class for each of the $m/d$ cycles.
    Therefore, since $E_\sigma^\star = \bigcup_{\lambda \in \mathbb{F}_q^*} E_{\sigma, \lambda}^\star$ is a disjoint union, we get that
    \begin{equation}\label{equ:sum-E-sigma-dist-2}
        |E_\sigma^\star| = \sum_{d \,\mid\, \gcd(q - 1, m)} \varphi(d) \binom{(q - 1)/d}{m / d} (m / d)! \, d^{m / d} \, \mathbbm{1}\!\big[|\gamma_k| = d \text{ for } k =1,\dots,t\big] ,
    \end{equation}
    where we used the fact that there are $\varphi(d)$ elements of order $d$ in the multiplicative (cyclic) group $\mathbb{F}_q^*$.
    If $d$ divides $m$, then there are $m! / \big(d^{m/d} (m/d)!\big)$ permutations whose cycles have all length equal to $d$ \cite[Eq.~(1.2)]{MR1824028}.
    Therefore, summing \eqref{equ:sum-E-sigma-dist-2} over $\sigma \in \mathbb{S}_m$ we get \eqref{lem:sum-E-sigma-dist-0}, as desired.
\end{proof}

\begin{lemma}\label{lem:sum-phi-binomial-bound}
	For each positive integer $m \leq q - 2$, we have that
	\begin{equation*}
		\sum_{d \,\mid\, \gcd(q - 1, m)} \varphi(d) \binom{(q-1)/d}{m/d} < 3 \binom{q-1}{m} .
	\end{equation*}
\end{lemma}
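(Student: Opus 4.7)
The plan is to peel off the $d = 1$ term, which contributes exactly $\binom{q-1}{m}$, and to show that the remaining terms together sum to strictly less than $2\binom{q-1}{m}$. Write $g := \gcd(q-1, m)$. If $g = 1$ there is nothing to prove, so from here on I would assume $g \geq 2$.

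My first step would be to establish the uniform bound $\binom{(q-1)/d}{m/d} \leq \sqrt{\binom{q-1}{m}}$ for every divisor $d \geq 2$ of $g$. To prove it I would invoke the iterated Vandermonde identity: partitioning a set of $q-1$ elements into $d$ equal blocks of size $(q-1)/d$ and expanding $\binom{q-1}{m}$ as the sum over the ways to distribute the $m$ selected elements among the blocks, one particular term (with exactly $m/d$ elements picked from each block) equals $\binom{(q-1)/d}{m/d}^d$. Since all remaining terms are non-negative, we get $\binom{(q-1)/d}{m/d}^d \leq \binom{q-1}{m}$, and for $d \geq 2$ this implies the square-root bound.

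Combining this bound with the classical identity $\sum_{d \mid g} \varphi(d) = g$ gives the tail estimate
\[
\sum_{d \mid g,\, d \geq 2} \varphi(d) \binom{(q-1)/d}{m/d} \leq (g-1)\sqrt{\binom{q-1}{m}} ,
\]
so the problem reduces to verifying the elementary inequality $(g-1)^2 < 4\binom{q-1}{m}$.

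This final step is the main obstacle, and the one place where the hypothesis $m \leq q - 2$ is fully exploited. The key observation is that, since $g$ divides both $q-1$ and $m$ while $m < q - 1$, the difference $q - 1 - m$ is a positive multiple of $g$, and in particular $q - 1 \geq m + g \geq m + 2$. Therefore $\binom{q-1}{m} \geq \binom{m+2}{m} = (m+1)(m+2)/2$. Combining with the trivial $g \leq m$ yields
\[
(g-1)^2 \leq (m-1)^2 < (m+1)(m+2) \leq 2\binom{q-1}{m} < 4\binom{q-1}{m} ,
\]
which closes the argument.
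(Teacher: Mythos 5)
Your proof is correct and follows essentially the same route as the paper's: the same block-partition bound $\binom{(q-1)/d}{m/d}^d \leq \binom{q-1}{m}$, the same identity $\sum_{d \mid n} \varphi(d) = n$, and the same final comparison with $\binom{m+2}{m}$. The only (pleasant) difference is that you avoid the paper's separate treatment of the case $m = q-2$ by observing that $\gcd(q-1,m) \geq 2$ forces $q - 1 \geq m + 2$ via divisibility of $q-1-m$.
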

\begin{proof}
	If $m = q - 2$ then the claim is obvious since $\gcd(q - 1, m) = 1$.
	Hence, assume that $m \leq q - 3$.
	Note that $\binom{a / d}{b / d}^d \leq \binom{a}{b}$ for all positive integers $a,b,d$ such that $d \mid \gcd(a, b)$.
	Indeed, this claim follows by considering that we can pick $b$ elements from $\{1, \dots, a\}$ by picking $b / d$ elements from $\{a(k-1)/d+1,\dots, ak/d\}$ for each $k \in \{1,\dots,d\}$.
	Consequently, we have that
	\begin{align*}
		\sum_{\substack{d \,\mid\, \gcd(q - 1, m) \\ d \,>\, 1}} &\varphi(d) \binom{(q-1)/d}{m/d}
		\leq \sum_{\substack{d \,\mid\, \gcd(q - 1, m) \\ d \,>\, 1}} \varphi(d) \binom{q-1}{m}^{1/d} \\
		&\leq \sum_{\substack{d \,\mid\, \gcd(q - 1, m) \\ d \,>\, 1}} \varphi(d) \binom{q-1}{m}^{1/2}
		= \gcd(q - 1, m)\binom{q-1}{m}^{1/2}
		\leq m \binom{q-1}{m}^{1/2} ,
	\end{align*}
	where we employed the well-known identity $\sum_{d \mid n} \varphi(d) = n$, which holds for every positive integer $n$.
	Therefore, we get that
	\begin{equation*}
		\binom{q-1}{m}^{\!\!-1} \sum_{d \,\mid\, \gcd(q - 1, m)} \varphi(d) \binom{(q-1)/d}{m/d} \leq 1 + m \binom{q-1}{m}^{\!\!-1/2} \leq 1 + m \binom{m + 2}{m}^{\!\!-1/2} < 3 ,
	\end{equation*}
	as desired.
\end{proof}

\subsection{Proof of Theorem~\ref{thm:E-GenIPKP}}

Let $\ell, m, n$ be positive integers with $\max(\ell, n) \leq m$, and let
\begin{equation*}
    (\bm{A}, \bm{B}, \bm{C}, \pi) \gets \GenIPKP(q,\ell,m,n) .
\end{equation*}
We have that
\begin{align}\label{equ:N-sol-IPKP-1}
    N_\sol&(\bm{A}, \bm{B}, \bm{C})
    = \sum_{\rho \,\in\, \mathbb{S}_m} \mathbbm{1}[\bm{A}\bm{\rho}\bm{B} = \bm{C}]
    = \sum_{\rho \,\in\, \mathbb{S}_m} \mathbbm{1}[\bm{A}\bm{\rho}\bm{B} = \bm{A}\bm{\pi}\bm{B}]
    = \sum_{\rho \,\in\, \mathbb{S}_m} \mathbbm{1}[\bm{A}(\bm{\rho} - \bm{\pi})\bm{B} = \bm{0}] \nonumber \\
    &= \sum_{\sigma \,\in\, \mathbb{S}_m} \mathbbm{1}[\bm{A}(\bm{\sigma} - \bm{I})\bm{\pi}\bm{B} = \bm{0}]
    \eqdistr \sum_{\sigma \,\in\, \mathbb{S}_m} \mathbbm{1}[\bm{A}(\bm{\sigma} - \bm{I})\widetilde{\bm{B}} = \bm{0}] ,
\end{align}
where $\widetilde{\bm{B}} \gets \mathbb{F}_q^{m \times n, n}$ and we employed the substitution $\rho = \sigma\pi$ and the fact that $\widetilde{\bm{B}} \eqdistr \bm{\pi}\bm{B}$.
Consequently, we get that
\begin{equation}\label{equ:E-IPKP-1}
    \mathbb{E}\big[N_\sol(\bm{A}, \bm{B}, \bm{C})\big]
    = \sum_{\sigma \,\in\, \mathbb{S}_m} \mathbb{P}[\bm{A}(\bm{\sigma} - \bm{I})\widetilde{\bm{B}} = \bm{0}] .
\end{equation}
Note that $\bm{A}$ and $\widetilde{\bm{B}}$ are independent random variables.
If $k \leq m$ is a positive integer and $\sigma \in \mathbb{S}_{m,k}$, then it is easy to prove that $\rank(\bm{\sigma} - \bm{I}) = m - k$.
Therefore, from \eqref{equ:E-IPKP-1} and Lemma~\ref{lem:prob-zero-products}\ref{ite:prob-zero-products-3}, we have that
\begin{align*}
    \mathbb{E}&\big[N_\sol(\bm{A}, \bm{B}, \bm{C})\big]
    = \sum_{k \,=\, 1}^m \, \sum_{\sigma \,\in\, \mathbb{S}_{m,k}} \mathbb{P}[\bm{A}(\bm{\sigma} - \bm{I})\widetilde{\bm{B}} = \bm{0}] \\
    &= \sum_{k \,=\, 1}^m |\mathbb{S}_{m,k}|
    \!\!\sum_{r \,=\, 0}^{\min(\ell, m - k)} \frac{|\mathbb{F}_q^{\ell \times (m-k), r}| \, |\mathbb{F}_q^{(\ell-r) \times k, \ell - r}| \, q^{k r}}{|\mathbb{F}_q^{\ell \times m, \ell}|} \prod_{i \,=\, 0}^{n - 1} \frac{q^{m - r} - q^i}{q^m - q^i} ,
\end{align*}
as desired.
The proof is complete.

\subsection{Proof of Theorem~\ref{thm:E-mono-GenIPKP-dist}}

Let $\ell$ and $m$ be integers with $0 \leq \ell \leq m < q$, and let $(\bm{A}, \bm{B}, \bm{C}, \pi) \gets \GenIPKP^\star(q,\ell,m,1)$.
By reasoning as in the beginning of the proof of Theorem~\ref{thm:E-GenIPKP}, we obtain that
\begin{equation}\label{equ:E-mono-IPKP-dist-1}
    \mathbb{E}\big[N_\sol(\bm{A}, \bm{B}, \bm{C})\big]
    = \sum_{\sigma \,\in\, \mathbb{S}_m} \mathbb{P}[\bm{A}(\bm{\sigma} - \bm{I})\widetilde{\bm{B}} = \bm{0}] ,
\end{equation}
where $\widetilde{\bm{B}} \gets \mathbb{F}_q^{\star m \times 1, 1}$.
It follows easily that, for each $\sigma \in \mathbb{S}_m$ and $\bm{x} \in \mathbb{F}_q^{\star m \times 1, 1}$, the rank of $(\bm{\sigma} - \bm{I})\bm{x}$ is equal to $0$ if $\sigma$ is the identity, and is equal to $1$ otherwise.
Hence, since $\bm{A}$ and $\widetilde{\bm{B}}$ are independent random variables, from Lemma~\ref{lem:prob-zero-products}\ref{ite:prob-zero-products-1} we get that
\begin{equation}\label{equ:E-mono-IPKP-dist-2}
    \mathbb{P}[\bm{A}(\bm{\sigma} - \bm{I})\widetilde{\bm{B}} = \bm{0}]
    = \begin{cases}
        1 & \text{ if } \bm{\sigma} = \bm{I}; \\
        (q^{m - \ell} - 1) / (q^m - 1) & \text{ if } \bm{\sigma} \neq \bm{I}. \\
    \end{cases}
\end{equation}
Therefore, from~\eqref{equ:E-mono-IPKP-dist-1} and~\eqref{equ:E-mono-IPKP-dist-2} we obtain~\eqref{equ:E-mono-GenIPKP-dist-0}, as desired.
The proof is complete.

\subsection{Proof of Theorem~\ref{thm:E-mono-GenPKP}}

Let $\ell$ and $m$ be positive integers with $\ell < m$, and let
\begin{equation*}
	(\bm{A}, \bm{B}, \pi) \gets \GenPKP(q,\ell,m,1)
	\quad\text{ and }\quad
	(\widetilde{\bm{A}}, \widetilde{\bm{B}}, \widetilde{\bm{C}}, \widetilde{\pi}) \gets \GenIPKP(q,\ell,m,1) .
\end{equation*}
Since line~2 of $\GenPKP$ is equivalent to a loop that keeps picking $\bm{A} \gets \mathbb{F}_q^{\ell \times m, \ell}$ until $\bm{A}\bm{\pi}\bm{B} = \bm{0}$ (essentially, a rejection sampling), it follows that the distribution of $(\bm{A}, \bm{B})$ is equal to the conditional distribution of $(\widetilde{\bm{A}}, \widetilde{\bm{B}})$ with respect to the event $\widetilde{\bm{C}} = \bm{0}$.

Hence, we have that
\begin{align}\label{equ:E-mono-GenPKP-1}
	\mathbb{E}&[N_\sol(\bm{A}, \bm{B})]
	= \sum_{\rho \,\in\, \mathbb{S}_m} \mathbb{P}[\widetilde{\bm{A}}\bm{\rho}\widetilde{\bm{B}} = \bm{0} \mid \widetilde{\bm{C}} = \bm{0}] \nonumber \\
	&= \frac1{\mathbb{P}[\widetilde{\bm{C}} = \bm{0}]} \sum_{\rho \,\in\, \mathbb{S}_m} \mathbb{P}[\widetilde{\bm{A}}\bm{\rho}\widetilde{\bm{B}} = \bm{0} \text{ and } \widetilde{\bm{C}} = \bm{0}] .
\end{align}
Let $\widehat{\bm{B}} \gets \mathbb{F}_q^{m \times 1, 1}$, so that $\widehat{\bm{B}} \eqdistr \widetilde{\pi}\widetilde{\bm{B}}$.
Since $\widetilde{\bm{A}}$ and $\widehat{\bm{B}}$ are independent random variables, by Lemma~\ref{lem:prob-zero-products}\ref{ite:prob-zero-products-1} we have that
\begin{equation}\label{equ:E-mono-GenPKP-2}
	\mathbb{P}[\widetilde{\bm{C}} = \bm{0}]
	= \mathbb{P}[\widetilde{\bm{A}}\widetilde{\pi}\widetilde{\bm{B}} = \bm{0}]
	= \mathbb{P}[\widetilde{\bm{A}}\widehat{\bm{B}} = \bm{0}]
	= \frac{q^{m-\ell} - 1}{q^m - 1} .
\end{equation}
Moreover, we have that
\begin{align}\label{equ:E-mono-GenPKP-3}
	\sum_{\rho \,\in\, \mathbb{S}_m}& \mathbb{P}[\widetilde{\bm{A}}\bm{\rho}\widetilde{\bm{B}} = \bm{0} \text{ and } \widetilde{\bm{C}} = \bm{0}]
    = \sum_{\rho \,\in\, \mathbb{S}_m} \mathbb{P}[\widetilde{\bm{A}}\bm{\rho}\widetilde{\bm{B}} = \bm{0} \text{ and } \widetilde{\bm{A}}\widetilde{\bm{\pi}}\widetilde{\bm{B}} = \bm{0}] \nonumber\\
	&= \sum_{\sigma \,\in\, \mathbb{S}_m} \mathbb{P}[\widetilde{\bm{A}}\bm{\sigma}\widetilde{\bm{\pi}}\widetilde{\bm{B}} = \bm{0} \text{ and } \widetilde{\bm{A}}\widetilde{\bm{\pi}}\widetilde{\bm{B}} = \bm{0}]
	= \sum_{\sigma \,\in\, \mathbb{S}_m} \mathbb{P}[\widetilde{\bm{A}}\bm{\sigma}\widehat{\bm{B}} = \bm{0} \text{ and } \widetilde{\bm{A}}\widehat{\bm{B}} = \bm{0}] \nonumber\\
	&= \sum_{\sigma \,\in\, \mathbb{S}_m} \mathbb{P}[\widetilde{\bm{A}}(\bm{\sigma}\widehat{\bm{B}} \mid \widehat{\bm{B}}) = \bm{0}] ,
\end{align}
where we employed the substitution $\rho = \sigma\widetilde{\pi}$.

For each $\sigma \in \mathbb{S}_m$ and $\bm{x} \in \mathbb{F}_q^{m \times 1, 1}$, we have that $\rank(\bm{\sigma}\bm{x} \mid \bm{x})$ is equal to $1$ if $\bm{x} \in E_\sigma$, and is equal to $2$ otherwise.
Hence, by Lemma~\ref{lem:prob-zero-products}\ref{ite:prob-zero-products-1}, we get that
\begin{equation*}
	\mathbb{P}[\widetilde{\bm{A}}(\bm{\sigma}\bm{x} \mid \bm{x}) = \bm{0}] =
	\frac{q^{m - \ell} - 1}{q^m - 1}
	\begin{cases}
		1 & \text{ if } \bm{x} \in E_\sigma ; \\
		(q^{m-\ell} - q) / (q^m - q) & \text{ if } \bm{x} \notin E_\sigma .
	\end{cases}
\end{equation*}
Consequently, we obtain that
\begin{align}\label{equ:E-mono-GenPKP-4}
	\mathbb{P}[\widetilde{\bm{A}}(\bm{\sigma}\widehat{\bm{B}} \mid \widehat{\bm{B}}) = \bm{0}]
	&= \mathbb{P}[\widehat{\bm{B}} \in E_\sigma] \, \mathbb{P}\big[\widetilde{\bm{A}}(\bm{\sigma}\widehat{\bm{B}} \mid \widehat{\bm{B}}) = \bm{0} \mid \widehat{\bm{B}} \in E_\sigma \big] \nonumber\\
    &\phantom{mmm}+ \mathbb{P}[\widehat{\bm{B}} \notin E_\sigma] \, \mathbb{P}\big[\widetilde{\bm{A}}(\bm{\sigma}\widehat{\bm{B}} \mid \widehat{\bm{B}}) = \bm{0} \mid \widehat{\bm{B}} \notin E_\sigma \big] \nonumber\\
    &= \frac{q^{m - \ell} - 1}{q^m - 1} \left(\frac{|E_\sigma|}{q^m - 1}
    + \left(1 - \frac{|E_\sigma|}{q^m - 1}\right) \frac{q^{m - \ell} - q}{q^m - q} \right) \nonumber\\
    &= \frac{q^{m - \ell} - 1}{q^m - 1} \left(\frac{q^{m - \ell} - q}{q^m - q} + \frac{q^m - q^{m-\ell}}{(q^m - 1)(q^m - q)} \, |E_\sigma| \right)
\end{align}
Therefore, from \eqref{equ:E-mono-GenPKP-1}, \eqref{equ:E-mono-GenPKP-2}, \eqref{equ:E-mono-GenPKP-3}, and \eqref{equ:E-mono-GenPKP-4}, we have that
\begin{equation*}
	\mathbb{E}[N_\sol(\bm{A}, \bm{B})] = \frac{m!(q^{m - \ell} - q)}{q^m - q} + \frac{q^m - q^{m-\ell}}{(q^m - 1)(q^m - q)} \sum_{\sigma \,\in\, \mathbb{S}_m} |E_\sigma| .
\end{equation*}
At this point, the claim follows from Lemma~\ref{lem:sum-E-sigma}.
The proof is complete.

\subsection{Proof of Theorem~\ref{thm:E-mono-GenPKP-dist}}

Let $\ell$ and $m$ be positive integers with $\ell < m < q$, and let
\begin{equation*}
    (\bm{A}, \bm{B}, \pi) \gets \GenPKP^*(q,\ell,m,1)
    \quad\text{ and }\quad
    (\widetilde{\bm{A}}, \widetilde{\bm{B}}, \widetilde{\bm{C}}, \widetilde{\pi}) \gets \GenIPKP^*(q,\ell,m,1) .
\end{equation*}
The proof proceeds \emph{mutatis mutandis} as the proof of Theorem~\ref{thm:E-mono-GenPKP}.
The only differences are that: we pick $\widehat{\bm{B}} \gets \mathbb{F}_q^{\star m \times 1, 1}$, we have $E_\sigma^\star$ instead of $E_\sigma$ (so that, in particular, it holds $\mathbb{P}[\widehat{\bm{B}} \in E_\sigma^\star] = |E_\sigma^\star| / (m!\binom{q-1}{m})$), and we apply Lemma~\ref{lem:sum-E-sigma-dist} instead of Lemma~\ref{lem:sum-E-sigma}.

\section{Concluding remarks}\label{sec:remarks}

We proved exact formulas for the expected number of solutions to random instances of the IPKP and the PKP generated by the natural algorithms $\GenIPKP$, $\GenIPKP^\star$, $\GenPKP$, and $\GenPKP^\star$.
In addition, we compared these exact formulas with the heuristic formula that has been extensively used in previous works.
For instances generated by $\GenIPKP$ and $\GenPKP$, we showed that the heuristic formula can be far off from the exact values, while for instances generated by $\GenIPKP^\star$ and $\GenPKP^\star$ the heuristic formula provides good approximations to the exact values.

Let us briefly outline some possible directions for future research.
First, it might be interesting to generalize Theorems~\ref{thm:E-mono-GenIPKP-dist}, \ref{thm:E-mono-GenPKP}, and \ref{thm:E-mono-GenPKP-dist} to the multidimensional case $n > 1$.
For Theorem~\ref{thm:E-mono-GenPKP}, respectively Theorem~\ref{thm:E-mono-GenPKP-dist}, this seems related to the study of the rank of matrices of the form $(\bm{\sigma}\bm{X} \mid \bm{X})$, where $\sigma \in \mathbb{S}_m$ and $\bm{X} \in \mathbb{F}_q^{m \times n, n}$, respectively $\bm{X} \in \mathbb{F}_q^{\star m \times n, n}$.

Second, in light of the last paragraph of Section~\ref{sec:IPKP}, one could study the expected number of solutions to random instances of the IPKP and the PKP when $\bm{A}$ has pairwise distinct columns and $\bm{B}$ has pairwise distinct rows.

Third, one could try to compute, or just upper bound, the second moment, or more generally the higher moments, of the number of solutions $N_\sol$ to the random instances of the IPKP or the PKP.
This could make it possible to obtain upper bounds for the tail probability $\mathbb{P}[N_\sol \geq N]$, where $N \geq 2$ is an integer, which are better than bounds based only on the expected value of $N_\sol$, such as the Markov inequality.

Finally, from a more abstract perspective, one could investigate if an
appropriate normalization of $N_\sol$ converges in law to some known random variable (for example, a standard normal random variable) when $\ell, m, n$ go to infinity in a way to be made precise.

\end{document}